\theoremstyle{plain}
\newtheorem{theorem}{Theorem}[section]
\newtheorem{lemma}[theorem]{Lemma}
\newtheorem{corollary}[theorem]{Corollary}
\theoremstyle{definition}
\newtheorem{definition}[theorem]{Definition}
\theoremstyle{remark}
\newtheorem{remark}[theorem]{Remark}
\newcommand{\R}{\mathbb R}
\newcommand{\der}{\mathrm{d}}
\newcommand{\eps}{\varepsilon}
\renewcommand{\phi}{\varphi}
\newcommand{\abs}[1]{\left\lvert #1 \right\rvert}
\newcommand{\sisus}{\operatorname{int}}
\renewcommand{\theta}{\vartheta}
\newcommand{\ip}[2]{\left\langle#1,#2\right\rangle}
\title[X-ray tomography for piecewise constant functions]
{Geodesic X-ray tomography for piecewise constant functions on nontrapping manifolds}
\author{Joonas Ilmavirta}
\thanks{Department of Mathematics and Statistics, University of Jyv\"askyl\"a}
\address{Department of Mathematics and Statistics, University of
Jyv\"askyl\"a, P.O. Box 35 (MaD) FI-40014 University of Jyv\"askyl\"a,
Finland}
\email{joonas.ilmavirta@jyu.fi}
\author{Jere Lehtonen}
\email{jere.ta.lehtonen@jyu.fi}
\author{Mikko Salo}
\email{mikko.j.salo@jyu.fi}
\date{\today}
\begin{document}

\begin{abstract}
We show that on a two-dimensional compact nontrapping manifold with strictly convex boundary, a piecewise constant function is determined by its integrals over geodesics. In higher dimensions, we obtain a similar result if the manifold satisfies a foliation condition. These theorems are based on iterating a local uniqueness result. Our proofs are elementary.
\end{abstract}
\keywords{X-ray transform, integral geometry, inverse problems}

\maketitle

\section{Introduction}

If~$(M,g)$ is a compact Riemannian manifold with boundary and~$f$ is a function on~$M$, the geodesic X-ray transform of~$f$ encodes the integrals of~$f$ over all geodesics between boundary points.
This transform generalizes the classical X-ray transform that encodes the integrals of a function in Euclidean space over all lines.
The geodesic X-ray transform is a central object in geometric inverse problems and it arises in seismic imaging applications, inverse problems for PDEs such as the Calder\'on problem, and geometric rigidity questions including boundary and scattering rigidity (see the surveys~\cite{PaternainSaloUhlmann2014_survey, Uhlmann2014_BullMathScisurvey}).

It has been conjectured that the geodesic X-ray transform on compact nontrapping Riemannian manifolds with strictly convex boundary is injective~\cite{PSU}.
Here, we say that a Riemannian manifold~$(M,g)$ has \emph{strictly convex boundary} if the second fundamental form of~$\partial M$ in~$M$ is positive definite, and is \emph{nontrapping} if for any $x \in M$ and any $v \in T_x M \setminus \{0\}$ the geodesic starting at~$x$ in direction~$v$ meets the boundary in finite time.

In this work we prove the following result, which verifies this conjecture on two-dimensional nontrapping manifolds if we restrict our attention to piecewise constant functions (see definition~\ref{def:pw-constant}).

\begin{theorem}
\label{thm:main}
Let~$(M,g)$ be a compact nontrapping Riemannian manifold with strictly convex smooth boundary, and let $f\colon M \to \R$ be a piecewise constant function. Let either 
\begin{enumerate}
\item[(a)] $\dim(M) = 2$, or 
\item[(b)] $\dim(M) \geq 3$ and~$(M,g)$ admits a smooth strictly convex function.
\end{enumerate}
If~$f$ integrates to zero over all geodesics joining boundary points, then $f \equiv 0$.
\end{theorem}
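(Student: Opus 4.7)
My plan follows the route hinted at in the abstract: first prove a local uniqueness result near a strictly convex piece of boundary (or an interior convex hypersurface), and then propagate that result across the whole manifold, using two different propagation mechanisms in cases (a) and (b).

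The local step is the heart of the argument. Fix a point $p$ on a smooth strictly convex hypersurface $\Sigma \subset M$, for instance $\Sigma = \partial M$. I would claim that if the X-ray integrals of $f$ vanish over all short geodesics with both endpoints on $\Sigma$ near $p$, then $f$ vanishes on a one-sided neighborhood of $p$. To prove this I would introduce boundary-normal coordinates adapted to $\Sigma$ at $p$ and consider the family of short geodesics nearly tangent to $\Sigma$ at $p$; strict convexity forces such geodesics to return to $\Sigma$ and sweep a thin lens-shaped region on the inside of $\Sigma$. For a piecewise constant $f$ with pieces $E_i$ meeting $p$ and values $c_i$, the integral $\int_\gamma f$ equals $\sum_i c_i\,\mathcal{H}^1(\gamma \cap E_i)$, and the length functions $\mathcal{H}^1(\gamma \cap E_i)$ should depend on the natural geodesic parameters (tangent direction and depth of the perturbation) in sufficiently independent ways, given the regularity built into Definition~\ref{def:pw-constant}, to force all $c_i = 0$.

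For case (b), a smooth strictly convex function $\phi\colon M \to \R$ gives a foliation by strictly convex sublevel sets $\{\phi \leq t\}$ meeting $\partial M$ transversally away from critical points of $\phi$. I would apply the local lemma layer by layer in the Uhlmann--Vasy style: starting near $t=\max\phi$, where each sublevel set is a small neighborhood of $\partial M$, and moving downward, at each level the local lemma pushes the region where $f$ is known to vanish across one more thin slice; the critical set of $\phi$ is negligible, so iteration yields $f \equiv 0$ on all of $M$. For case (a) no such $\phi$ is assumed and one has to exploit the two-dimensional nontrapping geometry instead. Let $V$ be the maximal open subset on which $f$ has been proved to vanish (initially, a collar of $\partial M$ supplied by the local lemma). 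If $V \neq M$, consider the complementary compact set $K = M \setminus V$ and locate a point $p \in \partial K$ where the geometry looks locally like a convex hypersurface seen from outside: a first-contact geodesic from $\partial M$ that just touches $K$ produces such a point, and nontrapping plus dimension two guarantees that such a first-contact configuration exists on every connected component of $K$. Applying the local lemma at $p$ then enlarges $V$, contradicting maximality.

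The main obstacle I anticipate is the local lemma itself, specifically the linear independence of the slice-length functions $\mathcal{H}^1(\gamma \cap E_i)$ across a generic interface pattern at $p$; one must understand how a short nearly tangent geodesic cuts the finitely many pieces meeting $p$ and verify that the two natural parameters give enough variation to invert the resulting system. A secondary difficulty specific to case (a) is that $\partial V$ need not be smooth, so the iteration must be organized around a well-behaved \emph{first-contact} configuration rather than literally replacing $M$ by $M \setminus V$ at each step, since peeling could otherwise destroy strict convexity or nontrapping of the residual domain.
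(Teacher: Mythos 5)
Your local step and your case (b) propagation follow essentially the paper's route: the paper proves exactly such a local lemma (lemmas~\ref{lma:xrt-2d-local} and~\ref{lma:xrt-hd-local}), making your ``sufficient independence of the slice lengths'' precise by blowing up at the corner point --- the rescaled integrals $\frac1h\int_{\gamma^h_v}f$ converge to integrals of the tangent function over lines in $T_xM$ (lemma~\ref{lma:sector-limit}), and the resulting Euclidean problem on a fan of cones is solved by a Vandermonde determinant (lemmas~\ref{lma:2d-corner-uniqueness}, \ref{lma:matrix}) --- and then iterates it along the level sets of the strictly convex function, exactly your Uhlmann--Vasy style layer stripping. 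So for (b) your outline is sound, modulo the unproved local lemma which you correctly flag as the main technical work.

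The genuine gap is your case (a). First, to apply the local lemma at a first-contact point $p$ you must know that $f$ integrates to zero over the \emph{short} geodesics with endpoints on the auxiliary convex curve $\Sigma$ near $p$, while the hypothesis only concerns complete boundary-to-boundary geodesics. In the foliation argument this localization is exactly what strict convexity of $\phi$ along geodesics buys: a chord nearly tangent to $\{\phi=c\}$ has its maximal extension contained in $\{\phi>c\}$, where $f$ is already known to vanish, and it reaches $\partial M$ by \cite[Lemma 6.1]{PaternainSaloUhlmannZhou}. With an arbitrary compact residual set $K=M\setminus V$ there is no such mechanism: the two-parameter family of short chords you need at $p$ (varying direction and depth) extends to maximal geodesics that may re-enter $K$ far from $p$ (and the touching geodesic itself may graze $K$ at several points), so the short-segment integrals cannot be isolated and the local lemma's hypotheses are not verified. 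Second, the existence of a first-contact configuration with $K$ locally on the correct (concave) side of a supporting geodesic, on every component of $K$, is asserted but not proved, and it is essentially a global convexity statement about nontrapping surfaces --- precisely the nontrivial input. The paper does not attempt such an argument: it handles (a) by invoking the theorem that a compact nontrapping surface with strictly convex boundary admits a smooth strictly convex function \cite{BeteluGulliverLittman, PaternainSaloUhlmannZhou}, which reduces (a) to the same foliation iteration as (b). As written, your (a) mechanism would require substantial new arguments to close both points.
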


This result follows from theorem~\ref{thm:xrt-hd}; for other corollaries, see section~\ref{sec:cor}.
We point out that theorem~\ref{thm:main} implies that a piecewise constant function is uniquely determined by the data if the tiling of the domain is known a priori, but not in general; see remark~\ref{rmk:pw-vectorspace}.

The result for $\dim(M) = 2$ appears to be new, but when $\dim(M) \geq 3$ this is a special case of the much more general result~\cite{UhlmannVasy} that applies to functions in~$L^2(M)$ (see the survey~\cite{PaternainSaloUhlmann2014_survey} for further results on the injectivity of the geodesic X-ray transform also in two dimensions). However, our proof in the case of piecewise constant functions is elementary.
We first prove a local injectivity result (lemmas~\ref{lma:xrt-2d-local} and~\ref{lma:xrt-hd-local}) showing that if a piecewise constant function integrates to zero over all short geodesics near a point where the boundary is strictly convex, then the function has to vanish near that boundary point.
We then iterate the local result by using a foliation of the manifold by strictly convex hypersurfaces as in~\cite{UhlmannVasy, PaternainSaloUhlmannZhou}, given by the existence of a strictly convex function.
The two-dimensional result follows since for $\dim(M) = 2$, the nontrapping condition implies the existence of a strictly convex function~\cite{BeteluGulliverLittman}.
See~\cite{PaternainSaloUhlmannZhou} for further discussion on strictly convex functions and foliations.

\subsection*{Acknowledgements}

J.I.\ was supported by the Academy of Finland (decision 295853).
J.L.\ and M.S.\ were supported by the Academy of Finland (Centre of Excellence in Inverse Problems Research), and M.S.\ was also partly supported by an ERC Starting Grant (grant agreement no 307023).

\section{Preliminaries}

In this section we will define what we mean by regular simplices, piecewise constant functions
and foliations.

\subsection{Regular tilings}

Recall that the standard $n$-simplex is the convex hull of the $n+1$ coordinate unit vectors in~$\R^{n+1}$.

\begin{definition}[Regular simplex]
\label{def:simplex}
Let~$M$ be a manifold with or without boundary, with a $C^1$-structure.
A regular $n$-simplex on~$M$ is an injective $C^1$-image of the standard $n$-simplex in~$\R^{n+1}$.
The embedding is assumed~$C^1$ up to the boundary of the standard simplex.
\end{definition}

The boundary of a regular $n$-simplex is a union of $n+1$ different regular $(n-1)$-simplices.
The boundary is $C^1$-smooth except where the $(n-1)$-simplices intersect.
When discussing interiors and boundaries of simplices, we refer to the natural structure of simplices, not to the toplogy of the underlying manifold.
Every regular $n$-simplex is a manifold with corners in the sense of~\cite{Joyce:OnManifoldsWithCorners}.

\begin{definition}[Depth of a point in a regular simplex]
\label{def:depth}
We associate to each point in a regular simplex an integer which we call the depth of the point.
Interior points have depth~$0$, the interiors of the regular $(n-1)$-simplices making up the boundary have depth~$1$, the interiors of their boundary simplices of dimension $n-2$ have depth~$2$, and so on.
Finally the $n+1$ corner points have depth~$n$.
\end{definition}

\begin{definition}[Regular tiling]
\label{def:tiling}
Let~$M$ be a manifold with or without boundary, with a $C^1$-structure.
A regular tiling of~$M$ is a collection of regular $n$-simplices~$\Delta_i$, $i\in I$, so that the following hold:
\begin{enumerate}
\item The collection is locally finite: for any compact set $K\subset M$ the index set $\{i\in I;\Delta_i\cap K\neq\emptyset\}$ is finite.
  (Consequently, if~$M$ is compact, the index set~$I$ itself is necessarily finite.)
\item $M=\bigcup_{i\in I}\Delta_i$.
\item $\sisus(\Delta_i)\cap\sisus(\Delta_j)=\emptyset$ when $i\neq j$.
\item If $x\in\Delta_i\cap\Delta_j$, then~$x$ has the same depth in both~$\Delta_i$ and~$\Delta_j$.
\end{enumerate}
\end{definition}

The simplices of a tiling have boundary simplices, the boundary simplices have boundary simplices, and so forth.
We refer to all these as the boundary simplices of the tiling.

\subsection{Tangent cones of simplices}
\label{sec:tangent-cone-def}

We will need tangent spaces of simplices, and at corners these are more naturally conical subsets than vector subspaces of the tangent spaces of the underlying manifold.

\begin{definition}[Tangent cone and tangent space of a regular simplex]
\label{def:tangent}
Let~$M$ be a manifold with or without boundary, with a $C^1$-structure.
Consider a regular $m$-simplex~$\Delta$ in~$M$ with $0\leq m\leq n=\dim(M)$.
Let $x\in\Delta$, and let $\Gamma = \Gamma(x,\Delta)$ be the set of all $C^1$-curves starting at~$x$ and staying in~$\Delta$.
The tangent cone of~$\Delta$ at~$x$, denoted by~$C_x\Delta$, is the set
\begin{equation}
\{\dot\gamma(0);\gamma\in\Gamma\}\subset T_xM.
\end{equation}
The tangent space of~$\Delta$ at~$x$, denoted by~$T_x\Delta$, is the vector space spanned by $C_x\Delta\subset T_xM$.
\end{definition}

One can easily verify that for any $m$-dimensional regular simplex~$\Delta$ the tangent cone~$C_x\Delta$ is indeed a closed subset of~$T_xM$ and~$T_x\Delta$ is the tangent space in the usual sense.
The cone is scaling invariant but it need not be convex.
If~$x$ is an interior point of~$\Delta$, then $C_x\Delta=T_x\Delta$ and they coincide with~$T_xM$ if $m=n$.

\subsection{Piecewise constant functions}

We are now ready to give a definition of piecewise constant functions and their tangent functions.

\begin{definition}[Piecewise constant function]
\label{def:pw-constant}
Let~$M$ be a manifold with or without boundary, with a $C^1$-structure.
We say that a function $f\colon M\to\R$ is piecewise constant if there is a regular tiling $\{\Delta_i;i\in I\}$ of~$M$ so that~$f$ is constant in the interior of each regular $n$-simplex~$\Delta_i$ and vanishes on $\bigcup_{i\in I}\partial\Delta_i$.
\end{definition}

The assumption of vanishing on the union of lower dimensional submanifolds is not important; we choose it for convenience.
The values of the function in this small set play no role in our results.

In dimension two one can essentially equivalently define piecewise constant functions via tilings by curvilinear polygons.
The only difference is in values on lower dimensional manifolds.

\begin{definition}[Tangent function]
\label{def:tangent-function}
Let~$M$ be a manifold with or without boundary, with a $C^1$-structure.
Let~$f$ be a piecewise constant function on it and~$x$ any point in~$M$.
Let $\Delta_1,\dots,\Delta_N$ be the simplices of the tiling that contain~$x$.
Denote by $a_1,\dots,a_N$ the constant values of~$f$ in the interiors of these simplices.
The tangent function $T_xf\colon T_xM\to\R$ of~$f$ at~$x$ is defined so that for each $i\in\{1,\dots,N\}$ the function~$T_xf$ takes the constant value~$a_i$ in the interior of the tangent cone~$C_x\Delta_i$ (see definition~\ref{def:tangent}).
The tangent function takes the value zero outside $\bigcup_{i=1}^N \sisus (C_x\Delta_i)$.
\end{definition}

If~$x$ is an interior point of a regular simplex in the tiling (it has depth zero in the sense of definition~\ref{def:depth}), then the tangent function is a constant function with the constant value of the ambient simplex.

\begin{remark}
\label{rmk:pw-vectorspace}
The set of all piecewise constant functions $M\to\R$ is not a vector space, since the intersection of two regular simplices is not always a union of regular simplices. Thus, if~$f_1$ and~$f_2$ are piecewise constant functions that have the same integrals over geodesics, then theorem~\ref{thm:main} shows that $f_1 = f_2$ in~$M$ whenever~$f_1$ and~$f_2$ are adapted to a common regular tiling but not in general.
\end{remark}

\subsection{Foliations}

For foliations we use the definition given in~\cite{PaternainSaloUhlmannZhou}:
\begin{definition}[Strictly convex foliation]
\label{def:foliation}
Let~$M$ be a smooth Riemannian manifold with boundary.
\begin{enumerate}
  \item The manifold~$M$ satisfies the foliation condition if there is a smooth strictly convex function $\phi \colon M \to \R$.
  \item A connected open subset~$U$ of~$M$ satisfies the foliation condition if there is a smooth strictly convex
  exhaustion function $\phi \colon U \to \R$, in the sense that the set $\{x \in U; \phi(x) \geq c\}$ is compact for any
  $c > \inf_U \phi$.\label{def:foliation:subset}
\end{enumerate}
\end{definition}

If (\ref{def:foliation:subset}) is satisfied, then $U \cap \partial M \neq \emptyset$, the level sets
of~$\phi$ provide a foliation of~$U$ by smooth strictly convex hypersurfaces (except possibly
at the minimum point of~$\phi$ if $U = M$), and the fact that~$\phi$ is an exhaustion function
ensures that one can iterate a local uniqueness result to obtain uniqueness in all of~$U$ by a layer stripping argument.

Furthermore, since the set $\{x \in U; \phi(x) \geq c\}$ is compact for any $c > \inf_U \phi$,
it follows that, intuitively speaking,
the level sets $\{\phi=c\}$ extend all the way from~$\partial M$ to~$\partial M$ without terminating at~$\partial U$. For more details, see~\cite{PaternainSaloUhlmannZhou}.

\section{X-ray tomography of conical functions in the Euclidean plane}

Let us denote the upper half plane by $H_+=\{(x,y)\in\R;y>0\}$.
Let $\alpha_1>\alpha_2>\dots>\alpha_N>\alpha_{N+1}$ and $a_1,\dots,a_N$ be any real numbers.
Consider the function $f\colon H_+\to\R$ given by
\begin{equation}
\label{eq:plane-cone}
f(x,y)
=
\begin{cases}
a_1, & \alpha_1y > x > \alpha_2y\\
a_2, & \alpha_2y > x > \alpha_3y\\
\vdots\\
a_N, & \alpha_Ny > x > \alpha_{N+1}y,\\
\end{cases}
\end{equation}
and $f(x,y) = 0$ for other $(x,y) \in H_+$.
This is an example of a tangent function (see definition~\ref{def:tangent-function}) of a piecewise constant function.
The analysis of this archetypical example is crucial to the proof of our main result in all dimensions.

Fix some $h>0$ and consider the lines~$\ell_t$ given by $y=h+tx$.

\begin{lemma}
\label{lma:2d-corner-uniqueness}
Fix any $h>0$, an integer $N\geq1$ and real numbers $\alpha_1>\alpha_2>\dots>\alpha_{N+1}$.
Let $f\colon H_+\to\R$ be as in~\eqref{eq:plane-cone} above.
Then the numbers $a_1,\dots,a_N$ are uniquely determined by the integrals of~$f$ over the lines~$\ell_t$ where~$t$ ranges in any neighborhood of zero.
\end{lemma}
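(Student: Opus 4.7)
The plan is to reduce the statement to injectivity (by linearity in the $a_i$'s) and then compute the line integrals as a rational function of $t$ whose shape is controlled by a Vandermonde argument.

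First, I would parametrize $\ell_t$ by $x$ and compute its intersection with the $i$-th sector $\{\alpha_{i+1} y < x < \alpha_i y\} \cap H_+$. For $|t|$ small, $1 - \alpha_i t > 0$, and the endpoints of the intersection are $x = \alpha_{i+1} h/(1 - \alpha_{i+1} t)$ and $x = \alpha_i h/(1 - \alpha_i t)$. Since the arc length element is $\sqrt{1+t^2}\,\der x$, the integral of $f$ over $\ell_t$ equals
\[
\sqrt{1+t^2}\,\sum_{i=1}^{N} a_i\left(\frac{\alpha_i h}{1-\alpha_i t}-\frac{\alpha_{i+1} h}{1-\alpha_{i+1} t}\right).
\]

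Second, assuming for contradiction that two choices of $(a_i)$ yield the same integrals, the difference satisfies this expression $\equiv 0$ for $t$ near $0$. A summation-by-parts rearrangement turns the double-sum into a telescoping form
\[
G(t):=\sum_{j=1}^{N+1} c_j\,\frac{\alpha_j h}{1-\alpha_j t}
\]
with $c_1 = a_1$, $c_j = a_j - a_{j-1}$ for $2\le j\le N$, and $c_{N+1} = -a_N$. Dividing out the nonvanishing factor $\sqrt{1+t^2}$, the hypothesis becomes $G(t)=0$ on a neighborhood of $0$. Since $G$ is a rational function of $t$, it then vanishes identically.

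Third, I would extract the $c_j$ either by residues (each summand has a simple pole at $t = 1/\alpha_j$ whenever $\alpha_j\ne 0$, and these poles are distinct by the strict ordering) or, equivalently, by the Taylor coefficients at $0$, which give the Vandermonde-type conditions
\[
\sum_{j=1}^{N+1} c_j\,\alpha_j^{k}=0\quad\text{for every }k\ge 1.
\]
The strict inequalities $\alpha_1>\cdots>\alpha_{N+1}$ make the relevant Vandermonde determinant nonzero, forcing $c_j=0$ for every $j$ with $\alpha_j\ne 0$. Inverting the telescoping relations then yields $a_i=0$ for all $i$.

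The main (and only) subtlety is the degenerate case in which some $\alpha_k=0$: the corresponding term $\alpha_k h/(1-\alpha_k t)$ is identically zero, so $c_k$ is not constrained by the rational-function identity. This forces a brief case analysis: the endpoint relations $a_1=c_1=0$ and $a_N=-c_{N+1}=0$ combined with the telescoping $a_j=a_{j-1}$ for $j\ne k$ propagate zero from the outermost indices inward and still give $a_i=0$ for every $i$, regardless of whether the vanishing $\alpha_k$ sits at an endpoint or in the interior of the list.
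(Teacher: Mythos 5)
Your proof is correct and follows essentially the same route as the paper: both compute the integral over $\ell_t$ explicitly as $\sqrt{1+t^2}\sum_i a_i\bigl(\tfrac{\alpha_i h}{1-\alpha_i t}-\tfrac{\alpha_{i+1}h}{1-\alpha_{i+1}t}\bigr)$, expand at $t=0$ (your Taylor coefficients are the paper's repeated derivatives), and finish with a Vandermonde determinant. Your summation by parts just performs in advance the column operations the paper uses to evaluate its determinant of power differences; the only new wrinkle this creates is the degenerate case $\alpha_k=0$, which your telescoping case analysis handles correctly and which the paper's formulation directly in the variables $a_i$ avoids altogether.
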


\begin{proof}
It suffices to show that if~$f$ integrates to zero over all these lines, then all the constants $a_1,\dots, a_N$ are zero.

The lines~$\ell_t$ and $x=\alpha_iy$ intersect at a point whose first coordinate is $\frac{h\alpha_i}{1-\alpha_it}\eqqcolon hz_i^t$.
The integral of~$f$ over the line~$\ell_t$ is
\begin{equation}
\begin{split}
\int_{\ell_t}f \der s
&=
h\sqrt{1+t^2}[
(z_1^t-z_2^t)a_1+
(z_2^t-z_3^t)a_2+
\dots\\
&\quad+
(z_N^t-z_{N+1}^t)a_N
].
\end{split}
\end{equation}
Since this vanishes for all~$t$ near zero, we get
\begin{equation}
c_1^ta_1+
c_2^ta_2+
\dots+
c_N^ta_N
=
0
\end{equation}
for all~$t$, where $c_i^t=z_i^t-z_{i+1}^t$.

Let~$D_t^k$ denote the~$k$th order derivative with respect to~$t$.
Differentiating with respect to~$t$ gives the system of equations
\begin{equation}
\begin{cases}
c_1^ta_1+\dots+c_N^ta_N&=0\\
D_tc_1^ta_1+\dots+D_tc_N^ta_N&=0\\
\frac12D_t^2c_1^ta_1+\dots+\frac12D_t^2c_N^ta_N&=0\\
\vdots\\
\frac1{(N-1)!}D_t^{N-1}c_1^ta_1+\dots+\frac1{(N-1)!}D_t^{N-1}c_N^ta_N&=0.
\end{cases}
\end{equation}
We will show that this system is uniquely solvable for the numbers~$a_i$ at $t=0$.

Since
\begin{equation}
z_i^t
=
\frac{\alpha_i}{1-\alpha_it}
=
\alpha_i \left[1+\alpha_it+(\alpha_it)^2+(\alpha_it)^3+\dots\right],
\end{equation}
we easily observe
\begin{equation}
\left.\frac1{k!}D_t^kc_i^t\right|_{t=0}
=
\alpha_i^{k+1}-\alpha_{i+1}^{k+1}.
\end{equation}
Therefore our system of equations at $t=0$ takes the form
\begin{equation}
A
\begin{pmatrix}
a_1\\a_2\\\vdots\\a_N
\end{pmatrix}
=
0,
\end{equation}
where
\begin{equation}
\label{eq:vdm}
A
=
\begin{pmatrix}
\alpha_1-\alpha_2 & \alpha_2-\alpha_3 & \cdots & \alpha_N-\alpha_{N+1} \\
\alpha_1^2-\alpha_2^2 & \alpha_2^2-\alpha_3^2 & \cdots & \alpha_N^2-\alpha_{N+1}^2 \\
\vdots & \vdots & \ddots & \vdots \\
\alpha_1^N-\alpha_2^N & \alpha_2^N-\alpha_3^N & \cdots & \alpha_N^N-\alpha_{N+1}^N
\end{pmatrix}.
\end{equation}
We need to show now that the matrix~$A$ is invertible.
This will be proven in lemma~\ref{lma:matrix} below, and that concludes the proof of the present lemma.
\end{proof}

\begin{lemma}
\label{lma:matrix}
The determinant of the matrix~$A$ in~\eqref{eq:vdm} is
\begin{equation}
(-1)^N \, \prod_{\mathclap{1\leq i<j\leq N+1}} \, (\alpha_j-\alpha_i),
\end{equation}
which is non-zero if all numbers~$\alpha_i$ are distinct.
\end{lemma}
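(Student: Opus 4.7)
The plan is to obtain $\det A$ as an $N \times N$ minor of the full $(N+1)\times(N+1)$ Vandermonde matrix
\begin{equation*}
V = \begin{pmatrix}
1 & 1 & \cdots & 1 \\
\alpha_1 & \alpha_2 & \cdots & \alpha_{N+1} \\
\alpha_1^2 & \alpha_2^2 & \cdots & \alpha_{N+1}^2 \\
\vdots & \vdots & \ddots & \vdots \\
\alpha_1^N & \alpha_2^N & \cdots & \alpha_{N+1}^N
\end{pmatrix},
\end{equation*}
whose determinant is the classical product $\prod_{1 \leq i < j \leq N+1}(\alpha_j - \alpha_i)$.

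The key manipulation is a sequence of $N$ elementary column operations on $V$: for $i = 1, 2, \ldots, N$, replace column $i$ by (column $i$) $-$ (column $i+1$), leaving column $N+1$ untouched. Since these operations do not change the determinant, the resulting matrix still has determinant $\det V$. The new column $i$ (for $i \leq N$) has top entry $1 - 1 = 0$, while the entries in rows $2, \ldots, N+1$ are precisely $\alpha_i^k - \alpha_{i+1}^k$ for $k = 1, \ldots, N$. The last column $N+1$ is unchanged.

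Next I would expand this modified matrix along its first row. Only the entry in position $(1, N+1)$ is nonzero, and it equals $1$, so expansion yields a single term with cofactor sign $(-1)^{1+(N+1)} = (-1)^N$ multiplied by the minor obtained by deleting the first row and the last column. Inspection shows that this minor is exactly $A$ as defined in~\eqref{eq:vdm}. Consequently
\begin{equation*}
\prod_{1 \leq i < j \leq N+1}(\alpha_j - \alpha_i) = \det V = (-1)^N \det A,
\end{equation*}
which, after multiplying both sides by $(-1)^N$, gives the stated formula. The non-vanishing under the assumption that the $\alpha_i$ are pairwise distinct is immediate from the product form.

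There is no real obstacle here; the main care needed is bookkeeping of the column-operation order (doing them sequentially for $i = 1, \ldots, N$ works because at each step column $i+1$ has not yet been touched) and tracking the cofactor sign $(-1)^{1+(N+1)}$ to confirm it equals $(-1)^N$ rather than $(-1)^{N+1}$.
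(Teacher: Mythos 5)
Your proof is correct and takes essentially the same route as the paper: both reduce $\det A$ to the classical Vandermonde determinant via elementary column operations, you by subtracting adjacent columns of the full $(N+1)\times(N+1)$ Vandermonde matrix and expanding along the first row, the paper by the reverse manipulation (telescoping column additions and bordering $A$ to rebuild the Vandermonde matrix). Your sign bookkeeping $(-1)^{1+(N+1)}=(-1)^N$ and the left-to-right order of the column operations are both handled correctly.
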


\begin{proof}
We modify the matrix following these steps:
\begin{itemize}
\item Add the last column to the second last one, then the second last one to the third last one, and so forth.
Finally add the second column to the first one.
The determinant is not changed.
\item Make the transformation
\begin{equation}
A
\mapsto
\begin{pmatrix}
0 & 1 \\
A & v
\end{pmatrix}
.
\end{equation}
The determinant is multiplied by~$(-1)^N$.
We can choose the vector~$v$ freely, and we pick $v=(\alpha_{N+1},\alpha_{N+1}^2,\dots,\alpha_{N+1}^N)$.
\item
Add the last column to all other columns.
The determinant is unchanged.
\end{itemize}
We end up with the matrix
\begin{equation}
\begin{pmatrix}
1 & 1 & \cdots & 1 \\
\alpha_1 & \alpha_2 & \cdots & \alpha_{N+1} \\
\vdots & \vdots & \ddots & \vdots \\
\alpha_1^N & \alpha_2^N & \cdots & \alpha_{N+1}^N
\end{pmatrix}.
\end{equation}
This is a Vandermonde matrix and the determinant is well known to be the product of differences.
\end{proof}

\section{Limits of geodesics at corners in two dimensions}

\subsection{Geodesics on manifolds and tangent spaces}
\label{sec:geodesic-intro}

Let~$M$ be a $C^2$-smooth Riemannian surface with $C^2$-boundary.
Suppose the boundary~$\partial M$ is strictly convex at $x\in\partial M$.
Let~$\gamma_i$, $i=1,2$, be two unit speed $C^1$-curves in~$M$ starting from~$x$ so that the initial velocities~$\dot\gamma_i(0)$ are distinct and non-tangential.

Let $r>0$ be such that $B(x,r)\subset M$ is split by the curves into three parts.
Let~$A$ be the middle one.

Let~$\sigma_i$, $i=1,2$, be the curves on~$T_xM$ with constant speed~$\dot\gamma_i(0)$, respectively.
Let~$S$ be the sector in~$T_xM$ lying between~$\sigma_1$ and~$\sigma_2$, corresponding to~$A$.

If $h>0$ and if $v\in T_xM$ is an inward pointing unit vector, let the geodesic~$\gamma^h_v$ be constructed as follows:
Take a unit vector~$w$ normal to~$v$ at~$x$ and let~$w(h)$ be its parallel transport along the geodesic through~$v$ by distance~$h$.
Then take the maximal geodesic in this direction.
For sufficiently small~$h$ and~$v$ sufficiently close to normal the geodesic~$\gamma^h_v$ has endpoints near~$x$ since the boundary is strictly convex.

Moreover, let~$\sigma^h_v$ be the similarly constructed line on~$T_xM$.
Varying~$h$ translates the line and varying~$v$ rotates it.

\subsection{Limits in two dimensions}

We are now ready to present our key lemma~\ref{lma:key} which allows us to reduce the problem into a Euclidean one.

\begin{lemma}
\label{lma:key}
Let~$M$ be a $C^2$-smooth Riemannian surface with $C^2$-boundary, and assume the boundary is strictly convex at $x\in\partial M$.
For an open set of vectors~$v$ in a neighborhood of the inward unit normal, we have
\begin{equation}
\label{eq:sector_limit}
\lim_{h\to0}\frac1h\int_{\gamma^h_v\cap A}\der s
=
\int_{\sigma^1_v\cap S}\der s,
\end{equation}
where we have used the notation of section~\ref{sec:geodesic-intro}.
\end{lemma}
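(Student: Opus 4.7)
The plan is to prove Lemma~\ref{lma:key} by a rescaling (blow-up) argument at~$x$. First I would extend~$M$ across~$\partial M$ to a slightly larger $C^2$-surface~$\tilde M$ and introduce Riemannian normal coordinates at~$x$ in~$\tilde M$; these identify a neighborhood of~$x$ with a neighborhood of~$0$ in $T_xM\cong\R^2$ in such a way that geodesics through~$x$ become straight lines and the metric has the form $g_{ij}(y)=\delta_{ij}+O(\abs{y}^2)$. Write $\phi_h(z)=hz$ for the dilation and $g^h_{ij}(z)=g_{ij}(hz)$ for the pulled-back metric, so that $g^h\to\delta$ in~$C^1_{\mathrm{loc}}(\R^2)$ as~$h\to 0$.

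Next I would examine how the pieces appearing in~\eqref{eq:sector_limit} transform under~$\phi_h$. Since the curves~$\gamma_i$ are $C^1$ at~$x$ with $\gamma_i(t)=t\dot\gamma_i(0)+o(t)$, the rescalings $s\mapsto h^{-1}\gamma_i(hs)$ converge uniformly on bounded intervals to the Euclidean rays bounding~$S$, so $\phi_h^{-1}(A)\to S$ in the Hausdorff sense on compact subsets of~$\R^2$. For the geodesic, set $\tilde\gamma^h_v(s)=h^{-1}\gamma^h_v(hs)$. A direct computation shows that~$\tilde\gamma^h_v$ is a $g^h$-geodesic parametrized by $g^h$-arc length with initial point~$v\in T_xM$ and initial velocity $w(h)$; because the Christoffel symbols of~$g$ vanish at~$0$ in normal coordinates, $w(h)\to w$ as $h\to 0$. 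Continuous dependence of solutions of the geodesic equation on the metric and initial data then yields $\tilde\gamma^h_v\to\sigma^1_v$ in~$C^1_{\mathrm{loc}}(\R)$.

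Finally, to produce the length identity, I would use that for~$v$ in a small open neighborhood of the inward unit normal the line~$\sigma^1_v$ meets the two rays bounding~$S$ transversally at points in the interior of~$S$. Transversality, together with the convergences above, lets me apply the implicit function theorem: the two intersection parameters of~$\tilde\gamma^h_v$ with $\phi_h^{-1}(\partial A)$ depend continuously on $h\geq 0$ and take at $h=0$ the expected intersection parameters with $\partial S$. A change of variables gives
\begin{equation*}
\frac1h\int_{\gamma^h_v\cap A}\der s
=
\int_{\tilde\gamma^h_v\cap\phi_h^{-1}(A)}\der s_{g^h},
\end{equation*}
and the right-hand side converges to the Euclidean length of $\sigma^1_v\cap S$, because $g^h\to\delta$ uniformly on bounded sets and the integration domain converges to $\sigma^1_v\cap S$.

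The main technical point is the endpoint control in the last paragraph: since the~$\gamma_i$ are only~$C^1$, there is no quantitative rate for the convergence of the rescaled boundary curves. However, the transversality of the Euclidean intersections $\sigma^1_v\cap\sigma_i$, which is precisely what the open-set condition on~$v$ guarantees, is preserved for all sufficiently small~$h$, and that is enough to pass to the limit.
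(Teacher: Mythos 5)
Your plan is sound, but it follows a genuinely different route from the paper's proof. You blow up at~$x$: rescale by~$h$, observe that the rescaled metrics~$g^h$ tend to the Euclidean metric in $C^1_{\mathrm{loc}}$, deduce $\tilde\gamma^h_v\to\sigma^1_v$ in $C^1_{\mathrm{loc}}$ from continuous dependence of the geodesic equation on coefficients and data, note that the rescaled region tends to the sector~$S$, and pass to the limit in the lengths using stability of transversal intersections. The paper instead encodes the two intersection points of~$\gamma^h_v$ with $\gamma_1,\gamma_2$ as the zero set of one $C^1$ map $F(h,s)$, applies the implicit function theorem jointly in $(h,s)$ (legitimate even though $\gamma_1,\gamma_2$ are only $C^1$, since the $h$-dependence enters only through the $C^1$ family $\gamma^h_v$), computes $s'(0)$ explicitly, and matches the resulting sine formulas with a direct Euclidean computation of $\int_{\sigma^1_v\cap S}\der s$; no compactness or stability lemmas are needed there. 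Your approach buys freedom from the explicit computation and from the case analysis on whether~$\nu$ lies between $\dot\gamma_1(0)$ and $\dot\gamma_2(0)$, at the cost of the ODE- and transversality-stability ingredients, which you correctly identify as the crux; your closing remark is the right fix, since a parametric implicit function theorem is not directly available when the rescaled boundary curves are merely continuous in~$h$, and transversality plus $C^1_{\mathrm{loc}}$ convergence of the curves indeed suffices. Two small caveats: for a $C^2$ metric the exponential map is only $C^1$, so the normal-coordinate expansion $g_{ij}(y)=\delta_{ij}+O(\abs{y}^2)$ is not justified at this regularity; but you do not actually need normal coordinates, since any coordinates with $g(0)=\delta$ give $g^h\to\delta$ in $C^1_{\mathrm{loc}}$ and $h^{-1}\eta(h)\to v$, which is all your argument uses. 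Also, like the paper, you implicitly use that $\gamma^h_v\cap A$ has no pieces away from the scale~$h$ (the geodesic does not re-enter~$A$ before leaving~$M$); this follows from the nontangentiality of the $\dot\gamma_i(0)$ together with strict convexity, and is equally implicit in the paper's identification of $\frac1h\int_{\gamma^h_v\cap A}\der s$ with $\frac{s_4(h)-s_3(h)}{h}$.
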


Notice that $\frac1h\int_{\sigma^h_v\cap S}\der s$ is independent of $h>0$ by scaling invariance.

\begin{proof}[Proof of lemma~\ref{lma:key}]
We will prove the claim for $v=\nu$, the inward pointing unit normal to the boundary. The same argument will work if~$v$ is sufficiently close to normal (so that neither~$\dot\gamma_i(0)$ is parallel to~$w$).

Since the statement is local we can assume that we are working in~$\R^2$ near the boundary point~$0$, and the metric~$g$ is extended smoothly outside~$M$. We wish to express the left hand side of \eqref{eq:sector_limit} in terms of the intersection points of~$\gamma_i$ with~$\gamma_v^h$. To do this, consider the map $F\colon B \to \R^{2+2}$,
\begin{equation}
F(h, s) = (\gamma_1(s_1) - \gamma_v^h(s_3), \gamma_2(s_2) - \gamma_v^h(s_4)),
\end{equation}
where $B \subset \R^{1+4}$ is a small neighborhood of the origin and $s = (s_1, s_2, s_3, s_4)$.
The map~$F$ is~$C^1$ and satisfies $F(0,0) = 0$.
The $s$-derivatives are given by 
\begin{equation}
D_{s} F(0,0)
=
\begin{pmatrix}
\dot\gamma_1(0) & 0 & -w & 0 \\
0 & \dot\gamma_2(0) & 0 & -w
\end{pmatrix}
\end{equation}
and this matrix is invertible since~$\dot{\gamma}_i(0)$ are nontangential. By the implicit function theorem, there is a $C^{1}$-function~$s(h)$ near~$0$ so that 
\begin{equation}
F(h, s)
=
0 \text{ for $(h,s)$ near $0$} \quad \iff \quad s = s(h).
\end{equation}
We may now express the quantity of interest as 
\begin{equation}
\lim_{h\to0}\frac1h\int_{\gamma^h_v\cap A}\der s = \lim_{h\to0}\frac{s_4(h)-s_3(h)}{h} = s'_4(0) - s'_3(0).
\end{equation}

It remains to compute~$s'_3(0)$ and~$s'_4(0)$.
Differentiating the equation $F(h, s(h)) = 0$ with respect to~$h$ gives that 
\begin{equation}
s'(0)
=
- D_{s} F(0,0)^{-1} \partial_h F(0,0).
\end{equation} 
Since $\gamma_v^h(0) = \eta(h)$ where~$\eta(h)$ is the geodesic through~$v$ we have $\partial_h \gamma_v^h(0)|_{h=0} = v$ and $\partial_h F(0, 0) = (-v, -v)$. Then by direct computations
\begin{equation}
s'(0)
=
\begin{pmatrix}
  \frac{w^2 v^1 - w^1 v^2}{\dot{\gamma}_1^1(0) w^2 - \dot{\gamma}_1^2(0) w^1} \\[5pt]
  \frac{w^2 v^1 - w^1 v^2}{\dot{\gamma}_2^1(0) w^2 - \dot{\gamma}_2^2(0) w^1} \\[5pt]
  \frac{\dot{\gamma}_1^2(0) v^1 - \dot{\gamma}_1^1(0) v^2}{\dot{\gamma}_1^1(0) w^2 - \dot{\gamma}_1^2(0) w^1} \\[5pt]
  \frac{\dot{\gamma}_2^2(0) v^1 - \dot{\gamma}_2^1(0) v^2}{\dot{\gamma}_2^1(0) w^2 - \dot{\gamma}_2^2(0) w^1}
\end{pmatrix},
\end{equation}
where $v = (v^1,v^2)$, $w = (w^1,w^2)$, and 
$\dot\gamma_{i}(0) = (\dot{\gamma}_i^1(0),\dot{\gamma}_i^2(0))$.

We assume that~$\nu$ is between~$\dot\gamma_1(0)$ and~$\dot\gamma_2(0)$ (similar reasoning works also in the other cases). Notice that
\begin{equation}
  s'_3(0)
  = \frac{- \sin(\angle(\dot\gamma_1(0),v))}{\sin(\angle(\dot\gamma_1(0),w))}
  \quad\text{and}\quad
  s'_4(0)
  = \frac{-\sin(\angle(\dot\gamma_2(0),v))}{\sin(\angle(\dot\gamma_2(0),w))}.
\end{equation}
Next we calculate $\int_{\sigma^1_v\cap S}\der s$ for $v=\nu$.
We denote by $z_i \in T_x M$ the intersection point
of the curves~$\sigma_i$ and~$\sigma_\nu^1$.
Since~$\nu$ is between $\dot\sigma_1(0) = \dot\gamma_1(0)$ and $\dot\sigma_2(0) = \dot\gamma_2(0)$,
we find $\abs{z_1-\nu} = -s'_3(0)$ and $\abs{\nu - z_2} = s'_4(0)$.
Thus
\begin{equation}
  \int_{\sigma^1_v\cap S}\der s = \abs{z_1-\nu} + \abs{\nu - z_2} = s'_4(0) - s'_3(0). \qedhere
\end{equation}
\end{proof}

We can extend lemma~\ref{lma:key} to piecewise constant functions and their tangent functions.

\begin{lemma}
\label{lma:sector-limit}
Let~$M$ be a $C^2$-smooth Riemannian surface with $C^2$-boundary, and assume the boundary is strictly convex at $x\in\partial M$. 
Let $\tilde M\supset M$ be an extension so that~$x$ is an interior point of~$\tilde{M}$.
Let $f\colon\tilde M\to\R$ be a piecewise constant function and assume that~$T_xf$ is supported in an inward-pointing cone which meets~$T_x\partial M$ only at $0\in T_xM$.


For all vectors~$v$ in some neighborhood of the inward unit normal $\nu\in T_xM$, we have
\begin{equation}
\lim_{h\to0}\frac1h\int_{\gamma^h_{v}}f\der s
=
\int_{\sigma^1_{v}}T_xf\der s,
\end{equation}
where we have used the notation of section~\ref{sec:geodesic-intro} and definition~\ref{def:tangent-function}.
Here the geodesics~$\gamma^h_v$ are geodesics of~$M$ and do not extend into $\tilde M\setminus M$.
\end{lemma}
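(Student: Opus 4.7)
The plan is to reduce Lemma~\ref{lma:sector-limit} to Lemma~\ref{lma:key} by decomposing~$f$ near~$x$ according to the simplices that meet at~$x$. Because the tiling is locally finite, only finitely many simplices $\Delta_1,\dots,\Delta_N$ of the tiling of~$\tilde M$ contain~$x$; let $a_1,\dots,a_N$ be the constant values of~$f$ on their interiors. The support hypothesis on~$T_xf$ translates directly into a geometric statement about these simplices: for every~$i$ with $a_i\neq 0$, the tangent cone $C_x\Delta_i$ is contained in an inward-pointing cone that meets $T_x\partial M$ only at~$0$. In particular, the two edges of~$\Delta_i$ emanating from~$x$ have initial velocities that are distinct and non-tangential to~$\partial M$, and for $r>0$ small enough one has $\Delta_i\cap B(x,r)\subset M$.

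For $h$ sufficiently small, the geodesic~$\gamma_v^h$ is contained in $M\cap B(x,Ch)$ for some constant~$C$, and therefore meets only the finitely many simplices listed above. Since~$f$ vanishes on the union of simplex boundaries, one can decompose
\begin{equation}
\int_{\gamma_v^h} f\,\der s = \sum_{i=1}^N a_i \int_{\gamma_v^h\cap \Delta_i}\der s.
\end{equation}
For each~$i$ with $a_i\neq 0$, I would apply Lemma~\ref{lma:key} with $\gamma_1,\gamma_2$ the two arclength-parametrized edges of~$\Delta_i$ at~$x$; the non-tangentiality condition is exactly the support hypothesis, the middle region~$A$ is identified with $\Delta_i\cap B(x,r)$, and the corresponding sector~$S$ in~$T_xM$ is~$C_x\Delta_i$. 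This yields
\begin{equation}
\lim_{h\to 0}\frac{1}{h}\int_{\gamma_v^h\cap \Delta_i}\der s = \int_{\sigma_v^1\cap C_x\Delta_i}\der s
\end{equation}
for all~$v$ in some neighborhood of~$\nu$.

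Summing these limits with the coefficients~$a_i$ and using the definition of~$T_xf$, which equals~$a_i$ on the interior of~$C_x\Delta_i$ and vanishes elsewhere, gives
\begin{equation}
\lim_{h\to0}\frac1h\int_{\gamma_v^h}f\,\der s = \sum_{i=1}^N a_i \int_{\sigma_v^1\cap C_x\Delta_i}\der s = \int_{\sigma_v^1}T_xf\,\der s.
\end{equation}
The neighborhood of~$\nu$ on which this works is the intersection of the finitely many neighborhoods supplied by Lemma~\ref{lma:key}, hence still a neighborhood. I expect the main care to be in the bookkeeping: matching each simplex with the middle region of Lemma~\ref{lma:key} (including the cases where~$v$ need not lie in~$C_x\Delta_i$, which are already covered by the "similar reasoning" remark in that proof), and checking that the part of~$f$ away from~$x$ does not contribute to the limit. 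Both points are routine consequences of local finiteness of the tiling and the strict inwardness of the cone containing~$\spt T_xf$.
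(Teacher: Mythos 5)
Your proposal is correct and is essentially the paper's own argument: the paper's proof is a one-line reduction, applying lemma~\ref{lma:key} to the cone $C_x\Delta$ of each simplex containing~$x$ separately, which is precisely the decomposition and summation you carry out (with the bookkeeping about non-tangential edges, small~$h$, and the finite intersection of neighborhoods of~$\nu$ made explicit).
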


\begin{proof}
It suffices to apply lemma~\ref{lma:key} to each cone~$C_x\Delta$ of a simplex~$\Delta$ containing~$x$ separately.
\end{proof}

We remark that the extension~$\tilde M$ only plays a role in the tiling, not in the geodesics.

\section{X-ray tomography in two dimensions}

\subsection{Local result}
\label{sec:xrt-2d-local}
Let us suppose that~$M$ is a Riemannian surface and~$x$ is a point in its interior.
Suppose~$\Sigma$ is a hypersurface going through the point~$x$ and that~$\Sigma$ is strictly convex in a neighborhood of~$x$. If~$V$ is a sufficiently
small neighborhood of~$x$ then $V \setminus \Sigma$ consists of two open sets which we denote by~$V_+$
and~$V_-$. Here~$V_+$ is the open set for which the part of the boundary coinciding with~$\Sigma$ is strictly convex.

\begin{lemma}\label{lma:xrt-2d-local}
Let~$M$ be a $C^2$-smooth Riemannian surface
and assume that $f\colon M\to\R$ is a piecewise constant function in the sense of definition~\ref{def:pw-constant}.
Fix $x \in \sisus(M)$ and let~$\Sigma$ be a 1-dimensional hypersurface (curve) through~$x$.
Suppose that~$V$ is a neighborhood of~$x$ so that
\begin{itemize}
  \item $V$ intersects only simplices containing~$x$,
  \item $\Sigma$ is strictly convex in~$V$,
  \item $f|_{V_-} = 0$, and
  \item $f$ integrates to zero over every maximal geodesic
    in~$V$ having endpoints on~$\Sigma$.
\end{itemize}
Then $f|_V = 0$.
\end{lemma}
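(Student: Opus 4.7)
The strategy is to reduce the local uniqueness on $V$ to the Euclidean conical uniqueness of Lemma~\ref{lma:2d-corner-uniqueness} by passing to the tangent function $T_xf$. Because $V$ meets only simplices that contain $x$, the constant value of $f$ on each such simplex agrees with the value of $T_xf$ on the interior of the associated tangent cone; hence it is enough to show that $T_xf \equiv 0$ on $T_xM$, and $f|_V=0$ will follow.

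I first fix linear coordinates on $T_xM$ so that $T_x\Sigma$ is the horizontal axis and the inward unit normal $\nu$ (pointing into $V_+$) is the positive vertical direction. Because $f|_{V_-}=0$, the tangent function $T_xf$ vanishes on the open lower half-plane; on the open upper half-plane it is a conical piecewise constant function taking finitely many constant values $a_1,\dots,a_N$ on consecutive sectors bounded by rays $x=\alpha_i y$ with $\alpha_1>\cdots>\alpha_{N+1}$. This is exactly the archetypal function \eqref{eq:plane-cone} of Section 3.

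I then invoke Lemma~\ref{lma:sector-limit} with $V_+$ playing the role of a manifold whose strictly convex boundary at $x$ is $\Sigma\cap V$, and $V$ as the extension $\tilde M$. For each $v$ in a small neighborhood of $\nu$ and each sufficiently small $h>0$, the short geodesic $\gamma_v^h$ is a maximal geodesic in $V$ with both endpoints on $\Sigma$, so by hypothesis its $f$-integral vanishes; passing to the limit $h\to 0$ yields $\int_{\sigma_v^1}T_xf\,\der s=0$ for every $v$ near $\nu$. Writing $v=(\sin\theta,\cos\theta)$, the line $\sigma_v^1$ has equation $x\sin\theta+y\cos\theta=1$; since $T_xf$ is homogeneous of degree $0$ away from the origin, rescaling the plane by the factor $\cos\theta$ preserves the vanishing of this line integral and turns the line into $y=1-x\tan\theta$, which is precisely the line $\ell_t$ of Lemma~\ref{lma:2d-corner-uniqueness} with $h=1$ and $t=-\tan\theta$. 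Hence $T_xf$ integrates to zero over $\ell_t$ for all $t$ in a neighborhood of $0$, Lemma~\ref{lma:2d-corner-uniqueness} forces $a_1=\cdots=a_N=0$, and $T_xf\equiv 0$, giving $f|_V=0$.

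The step I expect to require the most care is the appeal to Lemma~\ref{lma:sector-limit}: its support hypothesis asks that $T_xf$ sit inside an inward-pointing cone meeting $T_x\Sigma$ only at the origin, and this can fail if an edge of the tiling happens to be tangent to $\Sigma$ at $x$. In that degenerate case one should not use Lemma~\ref{lma:sector-limit} as a black box; instead one applies Lemma~\ref{lma:key} sector by sector, handling the sectors whose bounding rays lie along $T_x\Sigma$ by a separate direct limit computation, and then feeds the resulting conical integral identities into Lemma~\ref{lma:2d-corner-uniqueness} as before.
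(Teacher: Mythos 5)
Your argument coincides with the paper's for two of the three cases, but it has a genuine gap exactly at the point you flag at the end: simplices $\Delta$ whose tangent cone lies in the closed upper half-plane but meets $H_0=T_x\Sigma$ in a ray (an edge of the tiling tangent to $\Sigma$ at $x$ -- the paper's ``second type''). For such a simplex, $T_xf$ is not of the form \eqref{eq:plane-cone}, the support hypothesis of lemma~\ref{lma:sector-limit} fails, and the nontangentiality hypothesis of lemma~\ref{lma:key} fails as well. Your proposed remedy -- ``a separate direct limit computation'' for the tangential sectors, fed into lemma~\ref{lma:2d-corner-uniqueness} -- does not work as stated: by strict convexity of $\Sigma$, the length of $\gamma_v^h$ inside a sector bounded by a curve tangent to $\Sigma$ at $x$ scales like $\sqrt{h}$, not like $h$, so the $\tfrac1h$-normalized limit diverges and there is no finite conical integral identity to extract; moreover lemma~\ref{lma:2d-corner-uniqueness} only treats conical functions supported away from the horizontal axis, so even a renormalized identity would not fit into it. Note also that this is not a rare degeneracy you can dismiss: in the global iteration of theorem~\ref{thm:xrt-2d}, $\Sigma$ is a level set of the convex function passing through the point where $\phi$ attains its maximum on a simplex, and there an edge of the tiling is typically tangent to the level set, so this case carries the main burden of the induction.

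The paper handles these simplices by a separate, non-asymptotic argument before any tangent-function limit is taken: pick $0\neq v\in C_x\Delta\cap H_0$ and run the geodesic from $x$ with initial velocity $v+\eps\nu$; for small $\eps$ it stays in $V$, has endpoints on $\Sigma$, and is contained in $\Delta\cup\tilde\Delta$, where $\tilde\Delta$ is the unique neighbor across the tangent boundary edge. Since $C_x\tilde\Delta$ has nonempty interior, $\tilde\Delta$ must meet $V_-$, so $f|_{\tilde\Delta}=0$, and the vanishing of the integral over this geodesic forces $f|_\Delta=0$. Only after the first- and second-type simplices are eliminated does $T_xf$ satisfy the support condition of lemma~\ref{lma:sector-limit}, and then your reduction to lemma~\ref{lma:2d-corner-uniqueness} (including the harmless rescaling of $\sigma_v^1$ to a line $\ell_t$) is exactly the paper's third step. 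To repair your proof, insert this tangential-edge argument; the rest can stand.
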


\begin{remark}\label{rmk:xrt-2d-local}
We will also use the lemma in the case where $x \in \partial M$, the boundary is strictly convex at~$x$, and $\Sigma = \partial M$ (the set~$V_-$ is not needed then). The proof of the lemma is valid also in this situation.
\end{remark}

\begin{figure}
\includegraphics[width=12cm]{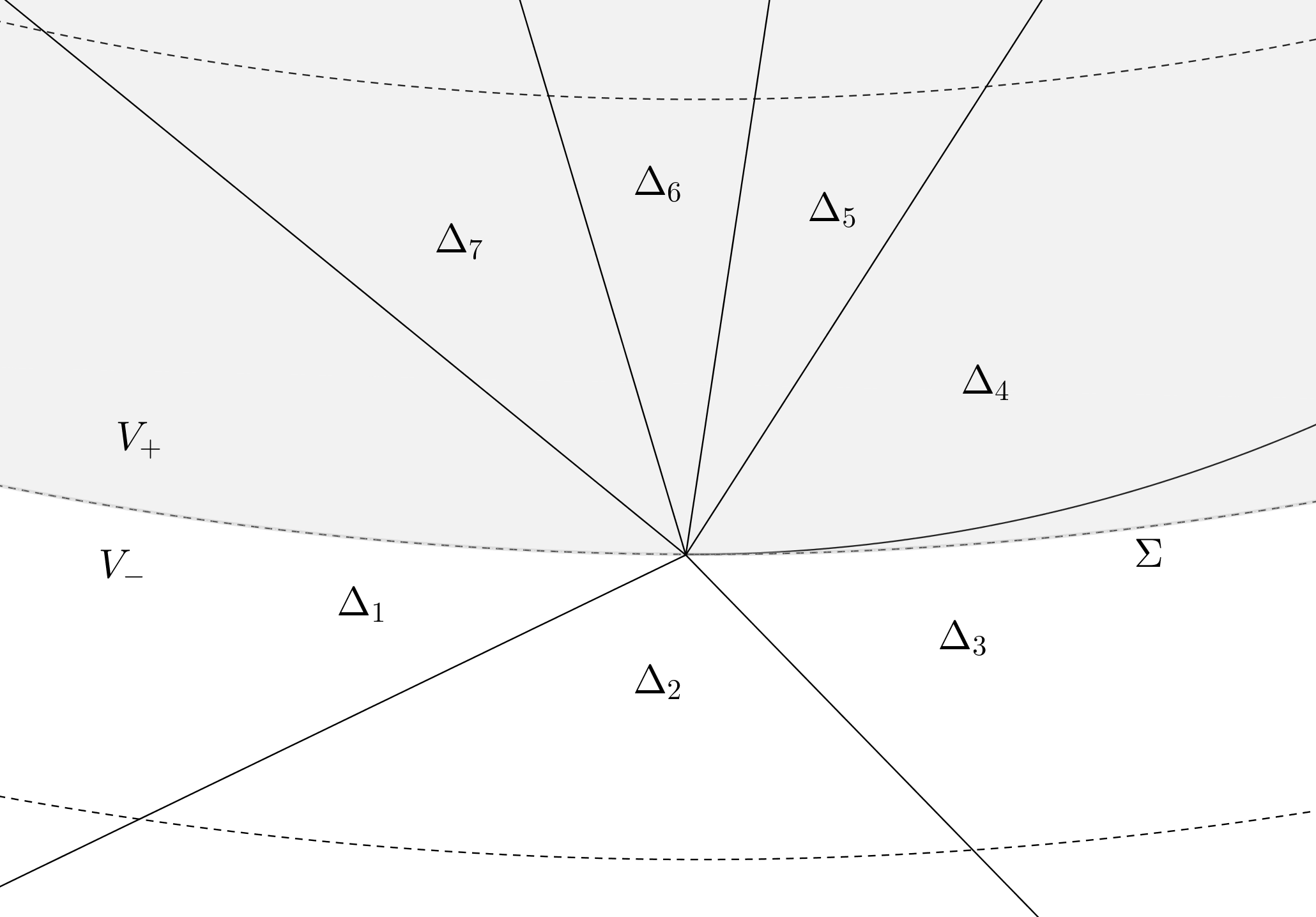}
\caption{The geometric setting of lemma~\ref{lma:xrt-2d-local}.
Simplices $\Delta_1,\Delta_2,\Delta_3$ are of the first type,
simplex~$\Delta_4$ is of the second type,
and simplices $\Delta_5,\Delta_6,\Delta_7$ are of the third type.
Later, this lemma will be applied in the case where~$\Sigma$ is the level
of an exhaustion function, and the dashed lines represent level sets of this function.}
\label{fig:lma:xrt-2d-local}
\end{figure}

\begin{proof}[Proof of lemma~\ref{lma:xrt-2d-local}]
We denote by $\Delta_1,\dots,\Delta_N$ the simplices containing the point~$x$.
The case $N=1$ is trivial, so we suppose that $N > 1$. Assume that~$\nu$ is the normal of~$\Sigma$ at~$x$ pointing into~$V_+$.
We denote $H_\pm = \{ w \in T_x M ; \pm \ip{\nu}{w} > 0 \}$
and $H_0 = T_x\Sigma \subset T_x M$.

We divide simplices $\Delta_1, \dots, \Delta_N$ into
three mutually exclusive types as follows:
\begin{enumerate}
  \item simplices~$\Delta$ so that $C_x \Delta \cap H_- \neq \emptyset$,
  \item simplices~$\Delta$ so that $C_x \Delta \subset H_+ \cup H_0$ and $C_x \Delta \cap H_0 \neq \{0\}$, and 
  \item simplices~$\Delta$ so that $C_x \Delta \subset H_+ \cup \{0\}$.
\end{enumerate}
Different types of simplices are illustrated in figure~\ref{fig:lma:xrt-2d-local}.

Let us first suppose that simplex~$\Delta$ is of the first type.
Since~$C_x\Delta$ has a non-empty interior, the set $C_x \Delta \cap H_-$ has a non-empty interior.
It must be that $\Delta \cap V_- \neq \emptyset$ and
thus~$f$ vanishes on~$\Delta$.

Suppose then that~$\Delta$ is a simplex of the second type.
We take $v \in C_x \Delta \cap H_0$, with $v \neq 0$, and define~$\gamma_v^\eps$ to be the geodesic
with initial data $\gamma_v^\eps(0) = x$ and $\dot{\gamma}_v^\eps(0) = v + \eps \nu$
where $\eps > 0$ and~$\nu$ points into~$V_+$. Since~$\Sigma$ is strictly convex near~$x$,
we can take~$\eps$ to be small enough so that the geodesic has endpoints on $\Sigma \cap V$
and it is contained in~$V$.

For small positive~$t$ we have $\gamma_v^\eps(t) \in \Delta$.
If~$\gamma_v^\eps$ is completely contained in~$\Delta$ we immediately
get that $f|_\Delta = 0$.
If this is not the case then there is a unique simplex~$\tilde\Delta$
so that $C_x \Delta \cap C_x \tilde\Delta = C_x\Delta \cap H_0 = C_x \tilde\Delta \cap H_0$,
in other words simplices~$\Delta$ and~$\tilde\Delta$ have a common boundary simplex
which is tangent to~$\Sigma$ at~$x$.
Thus for small~$\eps$ we have $\gamma_v^\eps \subset \Delta \cup \tilde\Delta$.
Since tangent cones of simplices have non-empty
interior it must be that $\tilde\Delta \cap V_- \neq \emptyset$,
which implies that $f|_{\tilde\Delta} = 0$.
By our assumptions~$f$ integrates to zero over~$\gamma_v^\eps$, hence $f|_\Delta = 0$.

We are left with simplices of the third type. For those we can apply lemma~\ref{lma:sector-limit}
combined with lemma~\ref{lma:2d-corner-uniqueness}: The
geodesics~$\gamma_v^h$ introduced in section~\ref{sec:geodesic-intro}
are contained in~$V$ for small~$h$ and~$v$ close enough
to normal. By lemma~\ref{lma:sector-limit} integrals of~$T_x f$ are zero over
the lines~$\sigma_v^1$ for~$v$ close enough to normal.
Since~$f$ can be non-zero only in simplices
of the third type we can apply lemma~\ref{lma:2d-corner-uniqueness}
to conclude that $T_xf = 0$ which implies that~$f$ vanishes
on those simplices too.
\end{proof}

\subsection{Global result}
The local result of lemma~\ref{lma:xrt-2d-local} combined
with the foliation condition allows us to obtain a global result:

\begin{theorem}
\label{thm:xrt-2d}
Let~$M$ be a smooth Riemannian surface with strictly convex boundary.
Suppose there is a strictly convex foliation of an open subset $U\subset M$ in the sense of definition~\ref{def:foliation}.
Let $f\colon M\to\R$ be a piecewise constant function in the sense of definition~\ref{def:pw-constant}.
If~$f$ integrates to zero over all geodesics in~$U$, then $f|_U=0$.
\end{theorem}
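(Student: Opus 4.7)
The plan is a layer stripping argument driven by the strictly convex exhaustion function $\phi\colon U\to\R$ provided by the foliation condition. Set $c_0=\inf_U\phi$ and
\begin{equation}
c^*=\inf\left\{c\geq c_0\colon f\equiv0\text{ on }\{x\in U\colon\phi(x)>c\}\right\}.
\end{equation}
Because the super-level sets $\{\phi\geq c\}$ are compact for $c>c_0$, the function $\phi$ is bounded above on $U$ and the set on the right is non-empty (any $c$ above $\sup\phi$ works), so $c^*$ is well defined. I aim to show $c^*=c_0$; combined with the fact that strict convexity of $\phi$ restricts its critical points to at most one minimum, so $\{\phi=c_0\}\cap U$ is at most a single point, the piecewise constant structure of $f$ upgrades vanishing off this negligible set to $f\equiv0$ on all of $U$.

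Arguing by contradiction I assume $c^*>c_0$ and first obtain $f\equiv0$ on $\{\phi>c^*\}\cap U$ by a monotone-limit argument. The level set $\Sigma^*=\{\phi=c^*\}\cap U$ is then a non-empty compact smooth hypersurface away from $\partial M$, since $\nabla\phi\neq0$ above level $c_0$; it is strictly convex in the sense of lemma~\ref{lma:xrt-2d-local} because $\phi\circ\gamma$ is strictly convex along any geodesic $\gamma$, forcing geodesic chords of $\Sigma^*$ to dip into the sub-level side. At each interior point $x\in\Sigma^*\cap\sisus(M)$ I apply lemma~\ref{lma:xrt-2d-local} with $\Sigma=\Sigma^*$, taking $V_-=V\cap\{\phi>c^*\}$ (where $f$ already vanishes) and $V_+=V\cap\{\phi<c^*\}$ (the strictly convex side). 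At points $x\in\Sigma^*\cap\partial M$ I instead invoke remark~\ref{rmk:xrt-2d-local} with $\Sigma=\partial M$, exploiting the strict convexity of the boundary. In both cases the lemma produces a neighborhood $V_x$ on which $f\equiv0$.

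The integral hypothesis of lemma~\ref{lma:xrt-2d-local} is verified by an extension argument: a short geodesic $\gamma\subset V$ with endpoints on $\Sigma^*$ extends to a maximal geodesic $\tilde\gamma$ of $M$, and strict convexity of $\phi\circ\tilde\gamma$ forces the extensions beyond $\Sigma^*$ to lie entirely in $\{\phi>c^*\}$, where $f$ already vanishes, so $\int_\gamma f=\int_{\tilde\gamma}f$; compactness of the super-level sets confines $\tilde\gamma$ to $U$ until it exits through $\partial M$, so $\tilde\gamma$ is a geodesic in $U$ and its integral is zero by hypothesis. Compactness of $\Sigma^*$ now yields a finite subcover $\{V_{x_i}\}$, whose union $W$ is an open neighborhood of $\Sigma^*$; a standard compactness argument (using compactness of $\{\phi\geq c^*-\delta\}$) produces $\epsilon>0$ such that $\{c^*-\epsilon\leq\phi\leq c^*\}\cap U\subset W$. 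Hence $f\equiv0$ on $\{\phi>c^*-\epsilon\}\cap U$, contradicting the definition of $c^*$, so $c^*=c_0$ and $f\equiv0$ on $U$. The main obstacle I anticipate is verifying the integral hypothesis of the local lemma at each point of $\Sigma^*$: one must extend short geodesics to maximal geodesics of $M$ and confirm both that the extensions lie in the already-vanishing super-level set and that they remain inside $U$ all the way to $\partial M$, which is precisely where the strict convexity of $\phi$ and the compactness of the super-level sets enter essentially.
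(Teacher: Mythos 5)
Your argument is correct, and its local ingredients are exactly the paper's: lemma~\ref{lma:xrt-2d-local} at interior points of the critical level set, remark~\ref{rmk:xrt-2d-local} at its boundary points, and the verification of the integral hypothesis by extending short chords of $\{\phi=c\}$ beyond their endpoints, where strict convexity of $\phi\circ\tilde\gamma$ forces the extensions into the already-vanishing super-level region and, together with compactness of the super-level sets, forces escape through $\partial M$ while staying in $U$ (this is precisely the role of Lemma~6.1 of \cite{PaternainSaloUhlmannZhou}, which the paper cites; your sketch essentially reproves it). Where you genuinely differ is the global iteration. You run a continuous layer-stripping: define the infimal level $c^*$, cover the compact level set $\{\phi=c^*\}$ by finitely many neighborhoods produced by the local lemma, and extract a uniform $\epsilon$ to push below $c^*$, reaching a contradiction. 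The paper instead exploits piecewise constancy globally: a super-level set meets only finitely many simplices, and since vanishing near a point forces vanishing on every simplex containing that point, the vanishing region jumps through the finitely many values $T_1>\dots>T_k$ of $\max_{\Delta_i}\phi$, so the induction terminates in finitely many explicit steps with no infimum or finite-subcover argument. Your scheme is the more generic one (it would work for any function class satisfying the local lemma), the paper's is more elementary and tailored to piecewise constant $f$. Two small imprecisions in your write-up are harmless and are shared, implicitly, by the paper: the assertions that $\nabla\phi\neq0$ above level $c_0$ and that $\{\phi=c_0\}$ is at most a single point are not justified for a general connected $U$; but any critical point of a strictly convex $\phi$ is a nondegenerate local minimum, so it is engulfed by the already-vanishing set $\{\phi>c^*\}$ (with $f=0$ at the point itself by piecewise constancy) and needs no application of the local lemma, and for your final upgrade one only needs that $\{\phi=c_0\}$ has empty interior together with the fact that $f$ vanishes on simplex boundaries by definition~\ref{def:pw-constant}.
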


\begin{remark}
In the previous result, it is not required to assume that~$\partial M$ is strictly convex (it would actually follow from the foliation condition that the foliation starts at a strictly convex boundary point). However, we have made this assumption for convenience.
\end{remark}

\begin{proof}[Proof of theorem~\ref{thm:xrt-2d}]
Denote $T=\max_U\phi$.
The sets $U_t = \{ \phi \geq t \}$ are compact for every $t > \inf_U \varphi$ by assumption,
and $\bigcup_{t > \inf_U \varphi} U_t = U$. It suffices to show that $f|_{U_t} = 0$ for any $t > \inf_U \varphi$.

Fix any such~$t$. The set~$U_t$ meets only finitely many regular simplices $\Delta_1,\dots,\Delta_N$ of the tiling corresponding to~$f$.
Denote by $T_1 > T_2 > \dots > T_k$ the distinct elements of the set $\{ \max_{\Delta_i}\phi; 1 \leq i \leq N\}$.
Note that $T_1 = T$.

First, take any point $x\in U\cap\partial M$ for which $\phi(x)=T$.
By remark~\ref{rmk:xrt-2d-local} the function~$f$ vanishes on a neighborhood of~$x$.
Therefore~$f$ vanishes on all simplices~$\Delta_i$ for which $\max_{\Delta_i}\phi = T = T_1$,
and hence in the set $\{\phi>T_2\}$.

We wish to continue this argument at points of the level set $\{ \phi = T_2 \}$. We can apply lemma~\ref{lma:xrt-2d-local}
at all points of $\{\phi=T_2\}$ which are in~$\sisus(M)$ to show that~$f$ vanishes near these points. This uses the fact that~$f$ will integrate to zero along short geodesics in $\{ \phi \leq T_2 \}$ near such points, since the foliation condition implies that the maximal extensions of such short geodesics reach~$\partial M$ in finite time by~\cite[Lemma 6.1]{PaternainSaloUhlmannZhou} and since~$f$ integrates to zero over geodesics in~$U$. The points in $\{\phi=T_2\}$ which are not in~$\sisus(M)$ are handled similarly by using remark~\ref{rmk:xrt-2d-local} (such points are on~$\partial M$, since the set $\{\phi \geq T_2\}$ cannot intersect the boundary of~$U$ except at the boundary of~$M$).
We find that~$f$ vanishes on all simplices~$\Delta_i$ for which $\max_{\Delta_i}\phi = T_2$.
Continuing iteratively we reach the index~$k$ and conclude that~$f$ vanishes on all simplices $\Delta_1,\dots,\Delta_N$.
\end{proof}

We remark that the two-dimensional versions of the corollaries presented in section~\ref{sec:cor} follow from theorem~\ref{thm:xrt-2d}.

\section{Higher dimensions}
\label{sec:hd}

\subsection{Local result} As in the two-dimensional case we begin by proving a local result.
First we need a technical lemma.

\begin{lemma}
\label{lma:good-2-plane}
Let~$M$ be an $n$-dimensional $C^1$-smooth Riemannian manifold with or without boundary and $\{\Delta_i;i\in I\}$ a regular tiling of it.
Take $x\in M$ of depth at least~$1$ and~$\Sigma$ a hypersurface through it.
Let $\Delta_1,\dots,\Delta_N$ be the simplices meeting~$x$.
For any $i\in\{1,\dots,N\}$ there is a $2$-plane $P \subset T_xM$
so that the following hold:
\begin{enumerate}
  \item $P\cap\sisus(C_x\Delta_i)\neq\emptyset$,
  \item for any boundary simplex~$\delta$ of dimension $n-2$ or lower in the tiling, we have $P\cap C_x\delta=\{0\}$, and
  \item $\dim(P \cap T_x\Sigma) = 1$.
\end{enumerate}
\end{lemma}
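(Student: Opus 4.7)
The plan is to construct $P$ as the span of two carefully chosen vectors $w$ and $u$ by a standard general-position argument. The key observation is that the tangent spaces $T_x\delta$ for the finitely many boundary simplices $\delta$ of dimension at most~$n-2$ through~$x$ are proper subspaces of~$T_xM$ of codimension at least~$2$, so their union together with the hyperplane~$T_x\Sigma$ is a finite collection of proper linear subspaces and therefore has empty interior in~$T_xM$.

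First I would pick $w \in \sisus(C_x\Delta_i)$ avoiding this whole union. This is possible because $\sisus(C_x\Delta_i)$ is a nonempty open subset of~$T_xM$ (since $\Delta_i$ is an $n$-simplex containing~$x$), while a finite union of proper subspaces has empty interior. Next, for each such~$\delta$ I form $W_\delta = T_x\delta + \R w$, which is a proper subspace of~$T_xM$ of dimension at most $n-1$ (because $w \notin T_x\delta$). I then pick $u \in T_xM$ outside the finite union $\bigcup_\delta W_\delta \cup \R w$ and set $P = \mathrm{span}(w,u)$, which is a genuine $2$-plane.

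The verification of the three conditions is then routine. Condition~(1) is immediate because $w \in P \cap \sisus(C_x\Delta_i)$. For condition~(2), any nonzero $aw + bu \in C_x\delta \subset T_x\delta$ would force $w \in T_x\delta$ (if $b = 0$) or $u \in W_\delta$ (if $b \neq 0$), each excluded by construction. For condition~(3), $w \notin T_x\Sigma$ guarantees $P \not\subset T_x\Sigma$, so $\dim(P \cap T_x\Sigma) \leq 1$; combined with the dimension inequality $\dim(P \cap T_x\Sigma) \geq \dim P + \dim T_x\Sigma - n = 1$, we get exact equality.

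I do not expect any serious obstacle: the argument rests entirely on the fact that a finite union of proper linear subspaces of~$T_xM$ has empty interior. The only point requiring a little care is treating condition~(3) alongside condition~(2); the cleanest way is to fold the avoidance of~$T_x\Sigma$ into the first step, so that the choice of~$w$ alone already certifies $P \not\subset T_x\Sigma$, independently of how~$u$ is chosen afterwards.
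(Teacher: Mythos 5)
Your proposal is correct: $w\in\sisus(C_x\Delta_i)$ gives (1); the choices $w\notin T_x\delta$ and $u\notin T_x\delta+\R w$ together with $C_x\delta\subset T_x\delta$ give (2); and $w\notin T_x\Sigma$ combined with $\dim(P\cap T_x\Sigma)\geq 2+(n-1)-n=1$ gives (3). The finiteness you rely on (only finitely many boundary simplices of dimension at most $n-2$ through $x$) is guaranteed by local finiteness of the tiling, so the genericity argument goes through. The paper works in the same general-position spirit but organizes the construction differently: it first fixes a vector $v$ near the normal $\nu$ of $\Sigma$ lying in $\sisus(C_x\Delta_j)$ for some $j$, projects $T_xM$ onto $T_x\Sigma$ along $v$, and picks the second spanning vector $w$ inside $\pi_v(C_x\Delta_i)$ but outside the $\pi_v$-images of the tangent cones of all boundary simplices of dimension at most $n-2$; then (3) is automatic because one spanning vector lies in $T_x\Sigma$ and the other does not, and (1) holds because a $\pi_v$-preimage of $w$ belonging to $C_x\Delta_i$ lies in $P$. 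Your version dispenses with the projection and rests only on the fact that a finite union of proper linear subspaces of $T_xM$ has empty interior, which is arguably cleaner (and sidesteps the paper's slightly glossed point that $v$ must avoid the spaces $T_x\delta$ for the projected cones to have the expected dimension); what the paper's construction buys in addition --- not required by the statement and not used downstream beyond properties (1)--(3) --- is that its plane contains a vector arbitrarily close to the normal of $\Sigma$ as well as a vector in $T_x\Sigma$.
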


\begin{proof}
We begin by picking a vector~$v$ from a small neighborhood of~$\nu$, where~$\nu$ is any unit normal of~$\Sigma$ at~$x$, so that $v \in \sisus(C_x \Delta_j)$ for some~$j$.
Let $\pi_v \colon T_xM\to T_x\Sigma$ be the projection down to~$T_x \Sigma$ in direction of~$v$, i.e., $\pi_v(av + z) = z$ whenever $a \in \R$ and $z \in T_x \Sigma$.
Let~$\delta$ be any regular $m$-simplex for $m\leq n-2$ contained in the tiling as a boundary simplex.
The projection~$\pi_v(T_x\delta)$ has dimension~$m$ (since $v \in \sisus(C_x \Delta_j)$) and therefore codimension $n-1-m\geq1$ in~$T_x\Sigma$.
There are finitely many such simplices~$\delta$,
so there is an open dense subset of vectors in~$\pi_v(C_x \Delta_i)$
that do not belong to the $\pi_v$-projection of the tangent cone of any boundary simplex of dimension
$n-2$ or lower.
We pick a vector~$w$ from that set and 
take~$P$ to be the plane spanned by~$v$ and~$w$. This~$P$ satisfies all the requirements.
\end{proof}

The next lemma is a higher dimensional analogue of lemma~\ref{lma:xrt-2d-local}.
Here~$V_+$ and~$V_-$ are defined similarly as in the beginning of section~\ref{sec:xrt-2d-local}.

\begin{lemma}\label{lma:xrt-hd-local}
Let~$M$ be a $C^2$-smooth Riemannian manifold 
and $f\colon M\to\R$ be a piecewise constant function in the sense of definition~\ref{def:pw-constant}.
Fix $x \in \sisus(M)$ and let~$\Sigma$ be a $(n-1)$-dimensional hypersurface through~$x$.
Suppose that~$V$ is a neighborhood of~$x$ so that
\begin{itemize}
  \item $V$ intersects only simplices containing~$x$,
  \item $\Sigma$ is strictly convex in~$V$,
  \item $f|_{V_-} = 0$, and
  \item $f$ integrates to zero over every maximal geodesic
    in~$V$ having endpoints on~$\Sigma$.
\end{itemize}
Then $f|_V = 0$.
\end{lemma}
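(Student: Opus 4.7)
My plan is to mirror the 2D proof of Lemma~\ref{lma:xrt-2d-local}, with the new technical Lemma~\ref{lma:good-2-plane} used to reduce the hardest case to a Euclidean planar problem. Let $\nu$ be the unit normal to $\Sigma$ at $x$ pointing into $V_+$, and set $H_0 = T_x\Sigma$ and $H_\pm = \{w \in T_xM : \pm \ip{\nu}{w} > 0\}$. I will classify the simplices $\Delta_1,\dots,\Delta_N$ containing $x$ into the same three types used in two dimensions: (1) $C_x\Delta \cap H_- \neq \emptyset$, (2) $C_x\Delta \subset H_+ \cup H_0$ with $C_x\Delta \cap H_0 \neq \{0\}$, and (3) $C_x\Delta \subset H_+ \cup \{0\}$.

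For type 1, the simplex $\Delta$ meets $V_-$ by scaling invariance of $C_x\Delta$, so $f|_\Delta = 0$ by hypothesis. For type 2, I adapt the 2D argument. The subtlety in higher dimensions is that $C_x\Delta \cap H_0$ can have various dimensions. In the favorable case where $\Delta$ has a codimension-1 boundary face $F$ with $T_xF = H_0$, the 2D argument applies verbatim: pick a nonzero $v$ in the relative interior of $C_xF$, perturb it inward along a direction in $\sisus(C_x\Delta)$, and fire the geodesic $\gamma^\eps_v$. Strict convexity of $\Sigma$ places its endpoints on $\Sigma\cap V$; the choice of $v$ keeps it inside $\Delta \cup \tilde\Delta$ for the unique simplex $\tilde\Delta$ sharing $F$ with $\Delta$; and since $C_x\tilde\Delta$ lies on the opposite side of $H_0$, it meets $H_-$, making $\tilde\Delta$ a type-1 simplex on which $f$ already vanishes. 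The geodesic integral condition then yields $f|_\Delta = 0$. The remaining type-2 cases, where $C_x\Delta \cap H_0$ has dimension strictly less than $n-1$, will be absorbed into the type-3 analysis below: on a generic 2-plane $P$ their traces look like type 3.

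The main work is type 3. For each type-3 simplex $\Delta_i$ I apply Lemma~\ref{lma:good-2-plane} to obtain a 2-plane $P \subset T_xM$ with $P \cap \sisus(C_x\Delta_i) \neq \emptyset$, $\dim(P \cap H_0) = 1$, and $P$ avoiding the tangent cone of every boundary simplex of dimension $\leq n-2$. I then consider the 1-parameter family of geodesics $\gamma^h_v$ (built as in Section~\ref{sec:geodesic-intro}, with $\Sigma$ in the role of $\partial M$) whose initial directions $v$ vary in $P$ near $\nu$. The transversality encoded in $P$ guarantees that only codimension-1 boundary simplices intersect $P$ nontrivially, doing so along lines through the origin, so that the implicit-function argument of Lemma~\ref{lma:key} runs on $P$ essentially unchanged and delivers the scaling limit
\begin{equation*}
  \lim_{h \to 0} \frac{1}{h} \int_{\gamma^h_v} f \, \der s = \int_{\sigma^1_v} T_xf|_P \, \der s.
\end{equation*}
By hypothesis the left side vanishes, and on $P \cap (H_+ \cup \{0\})$ the function $T_xf|_P$ has exactly the conical form of equation~\eqref{eq:plane-cone}, with constants equal to the values $a_k$ of $f$ on those simplices whose cones meet $\sisus(P \cap H_+)$. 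Lemma~\ref{lma:2d-corner-uniqueness} then forces every such constant to zero, in particular $a_i = 0$; varying the choice of $P$ across the (finitely many) remaining simplices completes the argument.

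I expect the principal obstacle to be the scaling-limit identity on $P$. Unlike the 2D case, $P$ is a proper subspace of $T_xM$ and the geodesics $\gamma^h_v$ do not live on any 2-surface inside $M$; it is precisely Lemma~\ref{lma:good-2-plane} that keeps the tiling looking 2-dimensional from the viewpoint of these $P$-directional geodesics. Verifying carefully that this picture genuinely reduces the integration to the archetypal Euclidean cone of Section~3, with no spurious contribution from lower-codimension boundary simplices and with the implicit function system remaining invertible, will be the most delicate part of the argument.
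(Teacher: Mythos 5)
Your overall architecture coincides with the paper's: the same trichotomy of simplices (your ``absorption'' of the low-dimensional type-2 cases into type 3 reproduces exactly the paper's higher-dimensional definition of type 3, where $\dim(C_x\Delta\cap H_0)\leq n-2$), the same treatment of types 1 and 2, and the same use of lemma~\ref{lma:good-2-plane} to pick a generic $2$-plane $P$ and reduce to the Euclidean cone lemma~\ref{lma:2d-corner-uniqueness}. The gap is precisely at the step you yourself flag as ``the most delicate part'': the scaling-limit identity for the ambient geodesics $\gamma^h_{v,w}$ with data in $P$. You assert that ``the implicit-function argument of lemma~\ref{lma:key} runs on $P$ essentially unchanged,'' but lemma~\ref{lma:key} as proved is a statement about curves in a surface; in $n$ dimensions the geodesic crosses $(n-1)$-dimensional faces, so the map $F$ and the invertibility computation would have to be rebuilt (intersection with parametrized hypersurfaces, an $\R^{1+2n}\to\R^{2n}$ system, transversality supplied by the choice of $P$), and you neither carry this out nor indicate how. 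Moreover, your remark that the geodesics ``do not live on any 2-surface inside $M$'' is exactly where the paper does something different and decisive: the family $\{\gamma^h_{v,w};h\in[0,h_0)\}$ sweeps out a two-dimensional submanifold $M_{v,w}\subset V_+\cup\Sigma$ on which each $\gamma^h_{v,w}$ is still a geodesic, whose boundary piece of $\Sigma$ is strictly convex, whose tangent plane at $x$ is $P$, and on which the tiling of $M$ restricts to a regular tiling (by the transversality built into lemma~\ref{lma:good-2-plane}). This lets the paper apply the already-proven two-dimensional lemma~\ref{lma:sector-limit} verbatim, which is the justification your proposal is missing. Your route (redoing the implicit-function computation directly in $n$ dimensions) is plausible, but as written it is a claim, not a proof.

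A secondary omission: before invoking lemma~\ref{lma:2d-corner-uniqueness} on $P$ you need to know that $T_xf|_P$ is supported in a closed cone contained in $P_+\cup\{0\}$, i.e.\ that every simplex whose tangent cone meets $P_0\cup P_-$ outside the origin already carries $f=0$. The paper proves this explicitly: by the genericity of $P$, any nonzero vector of $C_x\tilde\Delta\cap P_0$ lies in the interior of the cone of an $(n-1)$-dimensional boundary simplex, forcing $\tilde\Delta$ to be of type 1 or 2. You assert the resulting fan structure on $P_+$ but do not verify this support condition, which is also the hypothesis of lemma~\ref{lma:sector-limit}; it should be spelled out.
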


\begin{remark}
As in the two-dimensional case this lemma holds also for points
of the boundary with minor modifications. See remark~\ref{rmk:xrt-2d-local}.
\end{remark}

\begin{proof}[Proof of lemma~\ref{lma:xrt-hd-local}]
We denote by $\Delta_1,\dots,\Delta_N$ the simplices containing the point~$x$.
The case $N=1$ is trivial, so we suppose that $N > 1$.
We denote by~$H_+$ and~$H_-$ the open upper half-plane and the open lower half-plane in~$T_x M$ corresponding to~$V_+$ and~$V_-$.
Furthermore we denote $H_0 = T_xM \setminus (H_+ \cup H_-)$.

As in the two-dimensional case (lemma~\ref{lma:xrt-2d-local}) we divide
simplices $\Delta_1,\dots,\Delta_N$ into three mutually exclusive
types resembling those of the two-dimensional case:
\begin{enumerate}
  \item simplices~$\Delta$ so that $C_x \Delta \cap H_- \neq \emptyset$,
  \item simplices~$\Delta$ so that $C_x \Delta \subset H_+ \cup H_0$
    and $\dim (C_x\Delta \cap H_0) = n-1$ (i.e.\ $C_x\Delta \cap H_0$ contains an open set of $H_0$), and
  \item simplices~$\Delta$ so that $C_x \Delta \subset H_+ \cup H_0$ and
    $\dim (C_x\Delta \cap H_0) \leq n-2$.
\end{enumerate}

The proof that~$f$ vanishes on simplices of first and second type is the same as in the two-dimensional case.

Let~$\Delta$ be a simplex of the third type
and~$P$ be a plane given by lemma~\ref{lma:good-2-plane}
so that $P \cap \sisus(C_x\Delta) \neq \emptyset$.
We define $P_+ = P \cap H_+$, and~$P_-$ and~$P_0$ similarly.

We wish to show that $T_xf|_P = 0$. This implies that $f|_\Delta = 0$.
Since~$\Delta$ is arbitrary we can conclude that $T_x f = 0$ or,
equivalently, $f|_V = 0$.
To prove $T_x f|_P = 0$, we use a two-dimensional argument similar
to that used in the proof of lemma~\ref{lma:xrt-2d-local}.
In order to do that we must first show that~$T_x f|_{P}$
vanishes outside some closed conical set in~$P_+$.

Suppose that~$\tilde\Delta$ is a simplex so that $C_x \tilde\Delta \cap (P_0 \cup P_-) \neq \{0\}$.
We want to show that it is of the first or second type.
If $C_x \tilde\Delta \cap H_- \neq \emptyset$, meaning that the simplex is of the first type, we have $f|_{\tilde\Delta} = 0$.
If this is not the case, then $C_x \tilde\Delta \subset H_+ \cup H_0$ and $C_x \tilde{\Delta} \cap P_0 \neq \{0\}$.
Furthermore we know that for any boundary simplex~$\delta$
of~$\tilde \Delta$ for which $\dim(\delta)\leq n-2$ we have $P \cap \delta = \{ 0\}$.
Thus every vector in $P_0 \cap C_x \tilde\Delta$ must be in the interior of a tangent
cone of some $(n-1)$-dimensional boundary simplex.
Therefore $\dim (C_x\tilde\Delta \cap H_0) = n-1$, so the simplex~$\tilde \Delta$ is of the second type and hence
$f|_{\tilde\Delta} = 0$.

The remaining case is where $C_x \tilde{\Delta} \subset P_+ \cup \{0\}$. To deal with this case, we will apply lemma~\ref{lma:sector-limit} on suitably chosen submanifolds.
We choose $w_0 \in T_x \Sigma \cap P = P_0$ and $v_0 \in P_+$ orthogonal to~$w_0$.
Then~$P$ is spanned by~$v_0$ and~$w_0$.
Suppose $v \in P$ is in a neighborhood of~$v_0$ and 
$w \in P$ is perpendicular to~$v$.
We discussed the geodesics~$\gamma_{v}^h$ on~$M$ and~$\sigma_{v}^h$ on~$T_xM$ in section~\ref{sec:geodesic-intro}.
In two dimensions they did not depend on the choice of~$w$ (except for sign).
In higher dimensions they do, and we will denote the corresponding curves in $V_+ \cup \Sigma$ by~$\gamma_{v,w}^h$ and the tangent space curves by~$\sigma_{v,w}^h$ instead.

The family of geodesics $\{\gamma_{v,w}^h;h\in[0,h_0)\}$
foliates a smooth two-dimensional manifold $M_{v,w}\subset V_+ \cup \Sigma$, for a small enough $h_0>0$.
The geodesics~$\gamma_{v,w}^h$ are also geodesics on the manifold~$M_{v,w}$ although the submanifold may not be totally geodesic. Note that if~$M_{v_0,w_0}$ happens to be totally geodesic, which is always the case when $n=2$, then $M_{v,w} = M_{v_0,w_0}$ for all such~$v$ and~$w$.

The foliated manifold~$M_{v,w}$ has four essential properties:
its boundary near~$x$ (which is a subset of~$\Sigma$) is strictly convex, we have $T_x M_{v,w} = P$, the vector~$v_0$
is the inward pointing boundary normal at~$x$, and for small enough~$h_0$ the tiling
of~$M$ induces a proper tiling for~$M_{v,w}$ near~$x$.
To see this, observe that the plane~$P$ meets all $(n-1)$-dimensional boundary simplices transversally and does not meet lower dimensional simplices outside the origin, so the surface~$M_{v,w}$ will locally do the same due to the implicit function theorem.

We construct~$\sigma_{v,w}^h$ on~$P$ as in section~\ref{sec:geodesic-intro}.
Now we can apply lemma~\ref{lma:sector-limit} on manifold~$M_{v,w}$ 
to get $\int_{\sigma^1_{v,w}} T_xf|_P \, \der s = 0$.
By varying~$v$ and~$w$, and hence varying~$M_{v,w}$ too,
we are able to reduce the problem to a Euclidean one on the tangent space~$P$. We can
apply lemma~\ref{lma:2d-corner-uniqueness}
to deduce that $T_xf|_P = 0$. Especially $f|_\Delta = 0$.
\end{proof}

\subsection{The key theorem}

We next present our key theorem in all dimensions.
Theorem~\ref{thm:xrt-hd} contains theorem~\ref{thm:xrt-2d}.

\begin{theorem}
\label{thm:xrt-hd}
Let~$M$ be a smooth Riemannian manifold with strictly convex boundary.
Assume $\dim(M)\geq2$.
Suppose there is a strictly convex foliation of an open subset $U\subset M$ in the sense of definition~\ref{def:foliation}.
Let $f\colon M\to\R$ be a piecewise constant function in the sense of definition~\ref{def:pw-constant}.
If~$f$ integrates to zero over all geodesics in~$U$, then $f|_U=0$.
\end{theorem}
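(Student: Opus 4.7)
The plan is to mirror the proof of theorem~\ref{thm:xrt-2d} almost verbatim, replacing lemma~\ref{lma:xrt-2d-local} with its higher-dimensional analogue lemma~\ref{lma:xrt-hd-local} together with its boundary variant. Since the local uniqueness result has already been established in arbitrary dimension, the global argument reduces to a layer-stripping induction driven by the strictly convex foliation, with no genuinely new ideas required.

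Concretely, let $\phi\colon U\to\R$ be the strictly convex exhaustion function and set $U_t=\{\phi\geq t\}$. It suffices to show $f|_{U_t}=0$ for every $t>\inf_U\phi$, because $U=\bigcup_{t>\inf_U\phi}U_t$. Fix such a $t$; the compact set $U_t$ meets only finitely many simplices $\Delta_1,\dots,\Delta_N$ of the tiling adapted to $f$. Enumerate the distinct values $T_1>T_2>\dots>T_k$ of $\max_{\Delta_i}\phi$, noting $T_1=\max_U\phi$.

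I would then run an induction downward on $j$. For the base step, pick a boundary point $x\in\partial M\cap U$ with $\phi(x)=T_1$; strict convexity of $\partial M$ together with the boundary version of lemma~\ref{lma:xrt-hd-local} (applied with $\Sigma=\partial M$) shows $f$ vanishes in a neighborhood of $x$, hence on every simplex whose $\phi$-maximum equals $T_1$, so $f\equiv 0$ on $\{\phi>T_2\}$. For the inductive step, assume $f\equiv 0$ on $\{\phi>T_j\}$ and pick any $x\in\{\phi=T_j\}\cap U_t$. Take $\Sigma=\{\phi=T_j\}$, which is strictly convex near $x$ by the foliation condition, and choose $V$ a neighborhood of $x$ meeting only simplices through $x$. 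Then $V_-\subset\{\phi>T_j\}$, so $f|_{V_-}=0$; and by~\cite[Lemma 6.1]{PaternainSaloUhlmannZhou} every maximal geodesic in $V\cap\{\phi\leq T_j\}$ with endpoints on $\Sigma$ extends to a complete geodesic in $M$ reaching $\partial M$, hence integrates to zero by hypothesis. If $x\in\sisus(M)$, lemma~\ref{lma:xrt-hd-local} yields $f|_V=0$; if $x\in\partial M$, its boundary variant does. Covering the compact slice $\{\phi=T_j\}\cap U_t$ by finitely many such neighborhoods kills every simplex with $\phi$-maximum equal to $T_j$, so $f\equiv 0$ on $\{\phi>T_{j+1}\}$. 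After $k$ iterations, $f|_{U_t}=0$.

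The main obstacle — local injectivity at points where several simplices meet along curved interfaces in arbitrary dimension — has already been absorbed into lemma~\ref{lma:xrt-hd-local} via the plane-selection trick of lemma~\ref{lma:good-2-plane} and the tangent-cone limit of lemma~\ref{lma:sector-limit}. Hence the only points to double-check are routine: that $\{\phi=T_j\}\cap U_t$ is covered by finitely many local neighborhoods (follows from compactness and local finiteness of the tiling), and that the geodesic-extension lemma from~\cite{PaternainSaloUhlmannZhou} applies regardless of dimension, which it does.
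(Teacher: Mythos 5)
Your proposal is correct and follows essentially the same route as the paper: the paper's own proof of theorem~\ref{thm:xrt-hd} simply repeats the layer-stripping argument of theorem~\ref{thm:xrt-2d}, substituting lemma~\ref{lma:xrt-hd-local} (and its boundary variant) for lemma~\ref{lma:xrt-2d-local}, exactly as you do, including the use of~\cite[Lemma 6.1]{PaternainSaloUhlmannZhou} to extend the short geodesics to the boundary.
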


\begin{proof}
The proof is very similar to the two-dimensional case given in theorem~\ref{thm:xrt-2d}.
The analogue of lemma~\ref{lma:xrt-2d-local} for higher dimensions is provided by lemma~\ref{lma:xrt-hd-local}.
The rest of the proof is unchanged.
\end{proof}

\subsection{Corollaries}
\label{sec:cor}

Let us discuss some consequences of theorem~\ref{thm:xrt-hd}.

The theorem can be seen as a support theorem.
In particular, the classical support theorem of Helgason~\cite{HelgasonBook} for the X-ray transform in the case of piecewise constant functions follows easily from our theorem.

Corollaries~\ref{cor:many-foliations} and~\ref{cor:one-foliation} are easy to prove from theorem~\ref{thm:xrt-hd}.

\begin{corollary}
\label{cor:many-foliations}
Let~$M$ be a smooth Riemannian manifold with strictly convex boundary.
Let $\dim(M)\geq2$.
Suppose open subsets $U_1,\dots,U_N\subset M$ each have a foliation in the sense of definition~\ref{def:foliation}.
If a piecewise constant function $f \colon M\to\R$ has zero X-ray transform, then it vanishes in $\bigcup_{i=1}^NU_i$.
\end{corollary}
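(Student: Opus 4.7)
My plan is to apply Theorem~\ref{thm:xrt-hd} separately to each of the sets $U_i$ and then simply union the resulting vanishing regions. Since each $U_i$ is independently assumed to admit a strictly convex foliation, and since there is no interaction between the different foliations, the multi-foliation case reduces immediately to $N$ applications of the single-foliation result.

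Concretely, for each fixed $i \in \{1,\dots,N\}$, I would first verify the hypotheses of Theorem~\ref{thm:xrt-hd} for $U_i$. The geometric part, namely that $U_i$ carries a strictly convex foliation and that $M$ has strictly convex boundary, is given outright. For the integral condition, I need $f$ to integrate to zero over every maximal geodesic lying in $U_i$; this follows from the zero X-ray transform hypothesis because the foliation on $U_i$, via the exhaustion property of the strictly convex function $\phi_i$ and the standard fact (cf.\ Lemma 6.1 of \cite{PaternainSaloUhlmannZhou}) that strict convexity of $\phi_i$ along geodesics prevents trapping, forces every maximal geodesic inside $U_i$ to reach $\partial M$ in finite time. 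Every such geodesic is therefore a geodesic joining two boundary points of $M$, over which $f$ integrates to zero by hypothesis. Thus Theorem~\ref{thm:xrt-hd} applies and yields $f|_{U_i} = 0$.

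Repeating this for all $i$ and taking the union gives $f = 0$ on $\bigcup_{i=1}^{N} U_i$. There is essentially no obstacle here; the only step that warrants attention is reconciling the "zero X-ray transform" hypothesis of the corollary with the "integrates to zero over geodesics in $U_i$" hypothesis of Theorem~\ref{thm:xrt-hd}, and this is handled by observing that the foliation compels any maximal geodesic inside $U_i$ to be a boundary-to-boundary geodesic of $M$.
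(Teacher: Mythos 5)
Your proposal is correct and follows essentially the same route as the paper, which simply observes that the corollary is obtained by applying Theorem~\ref{thm:xrt-hd} to each $U_i$ separately and taking the union of the vanishing sets. Your reconciliation of the hypotheses is fine (and in fact slightly more than is needed): the geodesics actually used in the proof of Theorem~\ref{thm:xrt-hd} are maximal geodesics of $M$ contained in $U_i$ with endpoints on $\partial M$, and the zero X-ray transform hypothesis gives vanishing integrals over these directly.
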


In particular, in the case $N=1$ and $U_1=M$ we find:

\begin{corollary}
\label{cor:one-foliation}
Let~$M$ be a smooth Riemannian manifold with strictly convex boundary and with a strictly convex foliation.
Assume $\dim(M)\geq2$.
If a piecewise constant function $f \colon M\to\R$ has zero X-ray transform, then it vanishes everywhere.
\end{corollary}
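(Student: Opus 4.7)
The plan is to obtain the corollary as the immediate specialization of Theorem~\ref{thm:xrt-hd} to $U = M$ (equivalently, the $N = 1$, $U_1 = M$ case of Corollary~\ref{cor:many-foliations}). The only thing that needs verifying is that $U = M$ legitimately satisfies the foliation hypothesis in the form demanded by Theorem~\ref{thm:xrt-hd}, namely Definition~\ref{def:foliation}(\ref{def:foliation:subset}), whereas the hypothesis of the corollary is phrased in terms of the ambient version, Definition~\ref{def:foliation}(1).

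First I would take $\phi \colon M \to \R$ to be the smooth strictly convex function supplied by the ambient strictly convex foliation assumption. Since $M$ is a compact manifold with boundary, every superlevel set $\{x \in M; \phi(x) \geq c\}$ is a closed subset of the compact set $M$ and is therefore itself compact, for every $c > \inf_M \phi$. Hence $\phi$ is automatically a smooth strictly convex exhaustion function on $U := M$ in the sense of Definition~\ref{def:foliation}(\ref{def:foliation:subset}), so the connected open subset $U = M$ (assuming, as usual, that $M$ is connected) satisfies the foliation hypothesis required by Theorem~\ref{thm:xrt-hd}.

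Applying Theorem~\ref{thm:xrt-hd} to this $U$ then finishes the argument: the hypothesis that the X-ray transform of $f$ vanishes is precisely the hypothesis that $f$ integrates to zero over all geodesics in $U = M$, and the theorem's conclusion $f|_U = 0$ reads $f|_M \equiv 0$, which is the claim. There is no real obstacle here --- the corollary is essentially a one-line deduction from the main theorem, and the only bookkeeping point is the passage between the two clauses of Definition~\ref{def:foliation}, which is handled by compactness of $M$.
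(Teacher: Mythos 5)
Your proposal is correct and is essentially the paper's own argument: the corollary is obtained as the $U=M$ (equivalently $N=1$, $U_1=M$) specialization of theorem~\ref{thm:xrt-hd}. Your extra bookkeeping step reconciling the two clauses of definition~\ref{def:foliation} via compactness of~$M$ is a reasonable addition that the paper leaves implicit.
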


If we do not assume the foliation condition, but instead just assume the boundary to be strictly
convex locally, lemma~\ref{lma:xrt-hd-local} implies the following result, which
can be seen as a local support theorem:

\begin{corollary}
\label{cor:no-foliation-local}
Let~$M$ be a $C^2$-smooth Riemannian manifold with boundary.
Assume $\dim(M)\geq2$ and let $f \colon M\to\R$ be a piecewise constant function.
Suppose that $x \in \partial M$ is such that the boundary of~$M$ is strictly convex at~$x$.
If~$f$ integrates to zero over geodesics having endpoints near~$x$,
then~$f$ vanishes in a neighborhood of~$x$.

Especially if the boundary of~$M$ is strictly convex and~$f$ integrates
to zero over all geodesic contained in a neighborhood of the boundary,
then~$f$ vanishes near the boundary.
\end{corollary}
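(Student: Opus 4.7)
The plan is to deduce this corollary directly from the local lemma~\ref{lma:xrt-hd-local}, using its boundary-point variant as noted in the remark following that lemma (in analogy with remark~\ref{rmk:xrt-2d-local}). In that boundary version one takes $\Sigma=\partial M$ at the convex boundary point~$x$, and the set $V_-$ is absent because $V$ lies entirely on the inward side of~$\partial M$, so the hypothesis $f|_{V_-}=0$ is vacuous.

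First I would choose a neighborhood~$V$ of~$x$ so that the three remaining structural hypotheses of lemma~\ref{lma:xrt-hd-local} are met. Since the tiling associated with~$f$ is locally finite, shrinking~$V$ arranges that~$V$ meets only those simplices that contain~$x$. Since strict convexity of~$\partial M$ at~$x$ is an open condition (the second fundamental form is continuous and positive definite at~$x$), shrinking~$V$ further makes $\partial M\cap V$ strictly convex throughout~$V$. Finally, since the hypothesis of the corollary gives vanishing integrals over all geodesics with endpoints near~$x$, for~$V$ sufficiently small every maximal geodesic that stays inside~$V$ with endpoints on $\partial M\cap V$ is among these, and so~$f$ integrates to zero along each of them. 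The boundary version of lemma~\ref{lma:xrt-hd-local} then yields $f|_V=0$, which is the first assertion.

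The second assertion follows from the first by a covering argument. Under its hypotheses, every point $x\in\partial M$ is a strictly convex boundary point and~$f$ integrates to zero over every geodesic contained in some fixed open neighborhood~$W$ of~$\partial M$. For each $x\in\partial M$, choose a neighborhood $V_x\subset W$ as in the first part; then~$f$ vanishes on~$V_x$. The union $\bigcup_{x\in\partial M}V_x$ is an open neighborhood of~$\partial M$ on which~$f$ vanishes.

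The only subtle point I expect in carrying this out is verifying that the geodesics appearing in the hypothesis of lemma~\ref{lma:xrt-hd-local}, namely the maximal geodesics of~$M$ that stay inside~$V$ with endpoints on $\Sigma=\partial M\cap V$, truly lie in the class over which~$f$ is assumed to integrate to zero. This is precisely where strict convexity of~$\partial M$ at~$x$ is used: for~$V$ sufficiently small, such geodesics are short and hug the boundary, so in particular their endpoints lie arbitrarily close to~$x$ (respectively, the whole geodesic lies in~$W$), and hence they fall within the class on which the hypothesis provides vanishing of the integral.
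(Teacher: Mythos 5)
Your proposal is correct and takes essentially the same route as the paper, which derives this corollary directly from the boundary-point version of lemma~\ref{lma:xrt-hd-local} (as noted in the remark following it, cf.\ remark~\ref{rmk:xrt-2d-local}) with $\Sigma=\partial M$ and $V_-$ absent. Your added details --- shrinking $V$ using local finiteness of the tiling and strict convexity, checking that the relevant short geodesics fall within the hypothesis class, and the covering argument for the second assertion --- are exactly the routine verifications the paper leaves implicit.
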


If $\dim(M) \geq 3$ the previous result is a special case of the local support theorem for $L^2$-functions proven in~\cite{UhlmannVasy}. For $\dim(M) = 2$, a similar support theorem is known on real-analytic simple surfaces~\cite{Krishnan}.

\subsection{Proof of theorem~\ref{thm:main}}
Part (a) follows from corollary~\ref{cor:one-foliation} and the fact that a compact nontrapping Riemannian surface with strictly convex boundary always has a strictly convex foliation, see~\cite{BeteluGulliverLittman, PaternainSaloUhlmannZhou}.
Part (b) is implied by corollary~\ref{cor:one-foliation}. The theorem is proven.

\bibliographystyle{abbrv}
\bibliography{piecewise-constant}

\end{document}